\newcommand{\Z}{{\mathbb{Z}} }
\newcommand{\N}{{\mathbb{N}}}
\newcommand{\R}{\mathbb{R}}
\newcommand{\Pol}{\mathbb{P}}
\renewcommand{\i}{\ifmmode\mathit{\mathchar"7010 }\else\char"10 \fi}
\renewcommand{\j}{\ifmmode\mathit{\mathchar"7011 }\else\char"11 \fi}
\newcommand{\seq}[1]{\left\{#1\right\}}
\newcommand{\test}{\varphi}
\newcommand{\Dx}{{\Delta x}}
\newcommand{\Dt}{{\Delta t}}
\newcommand{\norm}[1]{\left\|#1\right\|}
\newcommand{\abs}[1]{\left|#1\right|}
\DeclareMathOperator*{\csch}{csch}
\DeclareMathOperator*{\sech}{sech}
\newtheorem{definition}{Definition}[section]
\newtheorem{theorem}{Theorem}[section]
\newtheorem{lemma}{Lemma}[section]
\newtheorem{remark}{Remark}[section]
\theoremstyle{definition} 
\newtheorem*{maintheorem*}{Main Theorem 1}
\newtheorem*{maintheorem**}{Main Theorem 2}
\title[KdV]{ Convergence of a Higher-Order scheme for Korteweg-de Vries equation}
\author[Dutta]{Rajib Dutta}
\address[Rajib Dutta]
{\newline
  Centre of Mathematics for Applications, Department of Mathematics,
  University of Oslo, P.O.\ Box 1053, Blindern, NO--0316 Oslo, Norway}
\email[]{rajibd@math.uio.no}
\author[Koley]{Ujjwal Koley} \address[Ujjwal Koley] {\newline  
   Tata Institute of Fundamental Research Centre,  
   Centre For Applicable Mathematics,   
   Post Bag No. 6503, GKVK Post Office,  Sharada
   Nagar, Chikkabommasandra, 
   Bangalore 560065, India.} 
\email[]{ujjwal@math.tifrbng.res.in}
\author[Risebro]{Nils Henrik Risebro} \address[Nils Henrik Risebro]{\newline
  Centre of Mathematics for Applications, Department of Mathematics,
  University of Oslo, P.O.\ Box 1053, Blindern, NO--0316 Oslo, Norway}
\email[]{nilshr@math.uio.no}
\urladdr{\href{http://www.mn.uio.no/math/english/people/aca/nilshr/}%
  {http://www.mn.uio.no/math/english/people/aca/nilshr/}}
\date{\today}
\begin{document}

\maketitle
\begin{abstract}
  We are study the convergence of higher order schemes for the Cauchy
  problem associated to the KdV equation. More precisely, we design a
  Galerkin type implicit scheme which has higher order accuracy in
  space and first order accuracy in time.  The convergence is
  established for initial data in $L^2$, and we show that the scheme
  converges strongly in $L^2(0,T;L^2_{\text{loc}}(\R))$ to a weak
  solution. Finally, the convergence is illustrated by several
  examples.
\end{abstract}

\section{Introduction}
In this paper, we consider a higher order finite element Galerkin type
scheme for computing approximate solutions of the Cauchy problem for
Korteweg-de Vries (KdV) equation
\begin{equation}
  \begin{cases}
    \label{eq:main}
    u_t + (\frac {u^2} 2)_x + u_{xxx} =0, & \quad x \in \R \times (0,T) \\
    u(x,0) =u_0(x), & \quad x \in \R,
  \end{cases}
\end{equation}
where $T>0$ is fixed, $u: \R \times [0,T) \rightarrow \R$ is the
unknown, and $u_0$ the initial data.

It is well known that the KdV equation models the propagation of waves
of small amplitude in dispersive systems (e.g.,  magneto-acoustic waves in
plasmas, shallow water waves, lattice waves and so on). Also, the KdV
equation has localized solutions, i.e., solutions whose value approach
a constant for $\abs{x}$ large, called solitons. These have the
property that their speed increases with their amplitude, and that
such solitary waves interact in a particle like manner. 

The first mathematical proof of existence and uniqueness of solutions
of the KdV equation was accomplished by Sj\"{o}berg
\cite{Sjoberg:1970} in 1970, using a semi-discrete finite difference
approximation, where one discretizes the spatial variable, thereby
reducing the equation to a system of ordinary differential equations.

Well posedness for the KdV equation has been studied extensively in
the last three decades, see \cite{Tao:2006, LinaresPonce:2009} and the
references therein. We will not discuss the vast literature regarding
the mathematical properties of the KdV equation here, but mention that
local well posedness local is proved in the Sobolev spaces $H^s$ for
$s>-3/4$ in \cite{kenig:1996}.

On the other hand, numerical computations for the KdV
equation has also been of great interest, since the landmark work by
Zabusky and Kruskal \cite{ZabuskyKruskal:1965}, where they discovered
the permanence of solitons for KdV equation using numerical
techniques. In fact, the numerical computation of solutions of the KdV
equation is rather capricious.  Two competing effects are involved,
namely the nonlinear convective term $uu_x$, which in the context of
the Burgers equation $u_t+uu_x= 0$ yields infinite gradients in finite
time even for smooth data, and the linear dispersive term $u_{xxx}$,
which in the Airy equation $u_t+u_{xxx}= 0$ produces hard-to-compute
dispersive waves, and these two effects combined makes it difficult to
obtain accurate and fast numerical methods.

There are number of numerical schemes available to analyze the behaviour
of solutions to the KdV equation numerically. We will 
discuss the full literature here, but only refer to those results which are
relevant to this paper. 

Spectral methods have been studied
extensively, see \cite{NouriSloan:1989,KassamTrefethen:2005} and
references therein.  Multi-symplectic schemes have been studied in
\cite{AscherMcLachlan:2005} (see also references therein).  Standard
Galerkin type approximations, using smooth splines on a uniform mesh,
to periodic solutions of KdV equation are analyzed in
\cite{ArnoldWinther:1982, Baker:1983, Winther:1980} .  All these work
aimed at deriving optimal rate of convergence estimate for Galerkin
approximations.  The discontinuous Galerkin method has been used to
approximate the solution of \eqref{eq:main} and rate of convergence
analysis has been presented for both periodic and full line case in
\cite{ShuYan:2002}. 

All the above mentioned references use the well posedness theory for
the KdV equation to prove convergence, and convergence
rates. Therefore, by themselves, they do not yield the existence of a
solution by furnishing constructive existence proofs.


There are however a few results regarding proof of convergence of
numerical methods for the KdV equation, which also give a direct and
constructive existence theorem. Indeed, the first proof of existence
and uniqueness of solutions to the KdV equation for initial data in
$H^3(\R/\Z)$ is based on a semi-discrete difference approximation
\cite{Sjoberg:1970}.  The corresponding fully discrete scheme, which
incidentally coincides with a fully discrete splitting scheme, was
analyzed in \cite{HKR1}, and it was shown that the scheme converges to
the classical solution if the initial data is in $H^3(\R)$, and to the
weak solution if the initial data lies in $L^2(\R)$.  The proof
assumes the CFL condition $\Dt=\mathcal{O}(\Dx^2)$ where $\Dt$ and
$\Dx$ are the temporal and the spatial discretizations
respectively. Laumer proved the direct convergence of a similar
scheme, but under the improved CFL condition
$\Dt=\mathcal{O}(\Dx)$. The results in this paper can be seen as a
generalization of the above in the context of higher order
approximation methods.

Our main tool is an observation due to Kato. In \cite{kato} it was proved that
the solution operator of the KdV equation has a smoothing effect due
to the dispersion. This smoothing permits a proof of existence of
solutions if the initial data are only in $L^2(\R)$. The smoothing
effect inherent in the KdV equation is
not as strong as for parabolic equations, and is of course absent in
the case of hyperbolic conservation laws. Precisely, solutions of
\eqref{eq:main} satisfy
\begin{equation}\label{eq:Katoestimate}
  \int_{-T}^T\int_{-R}^R \abs{u_x}^2 dxdt \le C(T,R).
\end{equation}
An analogue of this estimate is the main ingredient in our proof of
the convergence of our approximate solutions $u_\Dx$. 

The approximation $u_\Dx$ is generated by an implicit Euler
discretization of a Galerkin scheme with approximations in a subspace
of $H^2(\R)$ consisting of piecewise polynomial functions. Inspired by 
the proof of \eqref{eq:Katoestimate} we define the Galerkin
approximations using a weight function $\test$, which is positive and
constant outside an interval $(-Q,Q)$. Using this in our scheme
enables us to prove that
the collection $\seq{u_\Dx}_{\Dx>0}$ lies in the set
\begin{equation*}
  W=\seq{w\in L^2(0,T; H^1([-R,R]))\,\mid \, w_t\in L^2(0,T; H^{-2}([-R,R]))},
\end{equation*}
which is compact in  $L^2(0,T; L^2([-R,R]))$ by the
Aubin--Simon compactness lemma.


The rest of the paper is organized as follows: In
Section~\ref{sec:scheme}, we present the necessary notation and define
the fully-discrete finite element Galerkin type numerical
scheme. Since the fully-discrete scheme is implicit in nature, the
solvability of the scheme cannot be taken for granted and this is
addressed in Section~\ref{subsec:sol}.  In Section~\ref{sec:convergence},
we show the convergence to a weak solution if the initial data is in
$L^2(\R)$ and finally in Section~\ref{sec:numerical}, we exhibit some
numerical experiments showing the convergence.

\section{Numerical Scheme}
\label{sec:scheme}
We start by introducing some notation needed to define the Galerkin
finite element scheme. Throughout this paper we reserve $\Dx$ and
$\Dt$ to denote two small positive numbers that are the spatial
and temporal discretization parameters respectively, of the numerical
scheme.

For $j \in \N_{0} = \N \cup \lbrace 0 \rbrace$, we set $x_j=j\Dx$, and
for $n =0,1,\cdots, N$, where $N\Dt=T$ for some fixed time horizon
$T>0$, we set $t_n = n \Dt$.  Furthermore, we introduce the spatial
grid cells $I_j=[x_{j-1}, x_j]$.

Moreover given $R>0$, we define the cut off function $\varphi$ as
$\varphi(x)=\overline{\varphi} *w(x)$ where $\overline{\varphi}
(x)=\max\seq{1,\min\seq{1+x+R, 1+2R}}$ and $w$ is a symmetric positive
function with integral one and support in $[-1,1]$.  
Let $C_R$ be defined as
\begin{equation}\label{eq:CRdef}
  C_R=\max\seq{\norm{\varphi}_{L^\infty(\R)},
    \norm{\varphi_x}_{L^\infty(\R)},\norm{\varphi_{xx}}_{L^\infty(\R)},
    \norm{\varphi_{xxx}}_{L^\infty(\R)}}. 
\end{equation}
We define the weighted $L^2$ inner product as
\begin{equation*}
  \langle u,v\rangle_{\test} = \left(u,v\test\right)
\end{equation*}
where $(\cdot,\cdot)$ denotes the usual $L^2$ inner product,
and the associated weighted norm by $\norm{u}_{2,\test}^2=\langle
u,u\rangle_\test$.  

\subsection{Variational Formulation}
We assume that $r$ is a fixed integer $\geq 2$ and let $\Pol_r(I)$
denotes the space of polynomials on the interval $I$ of degree $\le
r$.  We seek an approximation $u$ to the solution of \eqref{eq:main}
such that for each $t \in [0,T]$, $u$ belongs to the finite
dimensional space
\begin{equation*}
  S_{\Dx}=\seq{ v\in H^2(\R)\, \mid\, v\in \Pol_r(I_j) \ \ \text{for all} \
    j}.
\end{equation*}
The variational form is derived by multiplying the strong form
\eqref{eq:main} with test functions $\varphi v $, with $ v\in S_{\Dx}$
and $\varphi$ specified above, and integrating over each element
separately. After integrating by parts twice, we obtain
\begin{align*}
  \left(u_t, \varphi v\right) - \left(\frac{u^2}{2}, (\varphi
    v)_x\right) + \left(u_x,(\varphi v)_{xx}\right) = 0, \hspace{.2cm}
  \forall v\in S_{\Dx}
\end{align*}
This is the semi-discrete form of the variational
formulation. However, in order to have a practical numerical method,
we must use a numerical method to integrate in time. We use the implicit
Euler method for this. This scheme reads as follows:
Find $u^n
\in S_{\Dx}$ such that
\begin{equation}
  \label{eq:scheme}
  \left(u^{n+1}, \varphi v\right)-\left(u^{n}, \varphi v\right) -
  \Dt \left(\frac{(u^{n+1})^2}{2}, (\varphi v)_x\right) + \Dt
  \left(u^{n+1}_x,(\varphi v)_{xx}\right) = 0,
\end{equation}
for all $v\in S_\Dx$ and
for $n=0,1,\ldots$, with initial data given by $u^0=Pu_0$,
where $P$ is the $L^2(\R)$ orthogonal projection onto $S_\Dx$.
Observe that, this is an implicit scheme, and in order to calculate
$u^{n+1}$ given $u^n$ one must solve a non-linear equation.

\subsection{Solvability for one time step }
\label{subsec:sol}
To solve \eqref{eq:scheme}, we use a simple fixpoint iteration, and define the
sequence $\seq{w^{\ell}}_{\ell\ge 0}$ by letting $w^{\ell+1}$ be the
solution of the linear equation
\begin{equation} \label{eq:iteration scheme}
  \begin{cases}
    \left(w^{\ell+1}, \varphi v\right) + \Dt \left(w^\ell w^\ell_x, \
      \varphi v\right)
    +  \Dt \left(w^{\ell+1}_x,(\varphi v)_{xx}\right)  = (u,\varphi v),  \\
    w^0 =u^n,
  \end{cases}
\end{equation}
this is to hold for all test functions $v\in S_\Dx$. The following
lemma guarantee the solvability of the implicit scheme
\eqref{eq:scheme}.
\begin{lemma}
  \label{lemma1}
  Choose a constant $L$ such that $0<L<1$ and set
  \begin{equation*}
    K=\frac{7-L}{1-L}>7.
  \end{equation*}
  We consider the iteration \eqref{eq:iteration
    scheme} with $w^0=u^n$, and assume that the following CFL
  condition holds
  \begin{equation}\label{eq:cfl}
    \lambda\le \frac{L}{\sqrt{C_R} 2\sqrt{2} K \norm{u^n}_{2,\varphi}},
  \end{equation}
  where where $C_R$ is defined by \eqref{eq:CRdef} and $\lambda$ is given by
  \begin{equation}
    \label{eq:lambdadef}
    \lambda^2 = \frac{\Dt^2}{\Dx^3}.
  \end{equation}
  Then there exists a function $u^{n+1}$ which solves
  \eqref{eq:scheme}, and $\lim_{\ell\to\infty}w^\ell = u^{n+1}$.
  Furthermore
  \begin{equation}
    \label{eq:unp1bnd}
    \norm{u^{n+1}}_{2,\varphi}\le K \norm{u^n}_{2,\varphi}.
  \end{equation}
\end{lemma}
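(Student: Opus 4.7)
The plan is a Banach fixed-point argument in the weighted space $(S_{\Delta x}, \|\cdot\|_{2,\varphi})$. I proceed by induction on $\ell$, establishing simultaneously three statements: (i) each linear problem \eqref{eq:iteration scheme} has a unique solution $w^{\ell+1}\in S_{\Delta x}$; (ii) the iterates stay in the ball $\|w^\ell\|_{2,\varphi}\le K\|u^n\|_{2,\varphi}$; (iii) the map $w^\ell\mapsto w^{\ell+1}$ contracts by a factor $L<1$. Together these produce a Cauchy sequence whose limit $u^{n+1}$ automatically satisfies \eqref{eq:scheme} and inherits the bound \eqref{eq:unp1bnd}.

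\textbf{Step 1 (solvability).} The left-hand side of \eqref{eq:iteration scheme} defines the bilinear form $a(w,v)=(w,\varphi v)+\Dt(w_x,(\varphi v)_{xx})$ on $S_{\Dx}$. Expanding $(\varphi w)_{xx}=\varphi_{xx}w+2\varphi_x w_x+\varphi w_{xx}$ and integrating by parts (legitimate in $H^2(\R)$) I get the identity
\[
(w_x,(\varphi w)_{xx})=-\tfrac{1}{2}(w^2,\varphi_{xxx})+\tfrac{3}{2}(w_x^2,\varphi_x),
\]
whose second term is nonnegative since $\varphi_x\ge 0$ by construction, and whose first term is bounded below by $-\tfrac{1}{2}C_R\|w\|_{2,\varphi}^2$ since $\varphi\ge 1$ and $|\varphi_{xxx}|\le C_R$. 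Hence $a(w,w)\ge (1-\Dt\,C_R/2)\|w\|_{2,\varphi}^2$, and combined with continuity (via inverse estimates on $S_{\Dx}$) this gives unique solvability of the linear step by Lax--Milgram.

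\textbf{Step 2 (a priori bound).} Assume inductively $\|w^\ell\|_{2,\varphi}\le K\|u^n\|_{2,\varphi}$. Testing \eqref{eq:iteration scheme} with $v=w^{\ell+1}$ and applying the identity above, the linear and dispersive pieces contribute at most $\|u^n\|_{2,\varphi}\|w^{\ell+1}\|_{2,\varphi}+\tfrac{\Dt\,C_R}{2}\|w^{\ell+1}\|_{2,\varphi}^2$. The nonlinear piece is the key: the inverse inequalities $\|w^\ell\|_{L^\infty}\le c\,\Dx^{-1/2}\|w^\ell\|_{2}$ and $\|w^\ell_x\|_{2}\le c\,\Dx^{-1}\|w^\ell\|_{2}$, valid on the piecewise polynomial space $S_{\Dx}$, together with $1\le\varphi\le C_R$, yield
\[
\Dt\,\bigl|(w^\ell w^\ell_x,\varphi w^{\ell+1})\bigr|\le c\,\sqrt{C_R}\,\lambda\,\|w^\ell\|_{2,\varphi}^{2}\,\|w^{\ell+1}\|_{2,\varphi},
\]
so that $\lambda=\Dt/\Dx^{3/2}$ emerges as the only scale-invariant combination. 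Inserting the inductive bound and using the CFL condition \eqref{eq:cfl} to control $\sqrt{C_R}\,\lambda K\|u^n\|_{2,\varphi}$ by a numerical multiple of $L$, elementary algebra with the specific choice $K=(7-L)/(1-L)$ closes the induction: $\|w^{\ell+1}\|_{2,\varphi}\le K\|u^n\|_{2,\varphi}$.

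\textbf{Step 3 (contraction and conclusion).} Set $z^{\ell+1}=w^{\ell+1}-w^\ell$; subtracting two consecutive versions of \eqref{eq:iteration scheme} and writing $w^\ell w^\ell_x-w^{\ell-1}w^{\ell-1}_x=w^\ell z^\ell_x+z^\ell w^{\ell-1}_x$ gives
\[
(z^{\ell+1},\varphi v)+\Dt(z^{\ell+1}_x,(\varphi v)_{xx})=-\Dt\,(w^\ell z^\ell_x+z^\ell w^{\ell-1}_x,\varphi v).
\]
Testing with $v=z^{\ell+1}$ and repeating the inverse-estimate calculation, now using the Step 2 bound $\|w^\ell\|_{2,\varphi},\|w^{\ell-1}\|_{2,\varphi}\le K\|u^n\|_{2,\varphi}$, produces $\|z^{\ell+1}\|_{2,\varphi}\le L\|z^\ell\|_{2,\varphi}$, with $L$ exactly the constant declared in the lemma. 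Thus $\{w^\ell\}$ is Cauchy in the weighted norm; its limit $u^{n+1}\in S_{\Dx}$ satisfies \eqref{eq:scheme} by passage to the limit in the linear equation, and \eqref{eq:unp1bnd} is inherited from Step 2. The main obstacle is the constant bookkeeping: matching the inverse-inequality constants, the powers of $\sqrt{C_R}$, and the algebraic prescription $K=(7-L)/(1-L)$ so that both the a priori inequality and the contraction estimate close with \emph{the same} $L$ fixed at the outset.
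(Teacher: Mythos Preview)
Your argument is correct in outline and reaches the same conclusion, but the route is genuinely different from the paper's, and the difference is worth noting.

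The paper does \emph{not} establish the uniform bound $\|w^\ell\|_{2,\varphi}\le K\|u^n\|_{2,\varphi}$ by testing the $\ell$th equation with $v=w^{\ell+1}$ as you do in Step~2. Instead it first derives the contraction-type inequality
\[
\|w^{\ell+1}-w^\ell\|_{2,\varphi}^2 \le 2C_R\lambda^2\max\bigl\{\|w^\ell\|_{2,\varphi}^2,\|w^{\ell-1}\|_{2,\varphi}^2\bigr\}\,\|w^\ell-w^{\ell-1}\|_{2,\varphi}^2,
\]
then obtains separately the explicit bound $\|w^1\|_{2,\varphi}\le 5\|u^n\|_{2,\varphi}$, and only \emph{after} that recovers the uniform bound by telescoping:
\[
\|w^{m+1}\|_{2,\varphi}\le \sum_{\ell=0}^{m}\|w^{\ell+1}-w^\ell\|_{2,\varphi}+\|w^0\|_{2,\varphi}
\le \frac{6}{1-L}\|u^n\|_{2,\varphi}+\|u^n\|_{2,\varphi}=\frac{7-L}{1-L}\|u^n\|_{2,\varphi}.
\]
This is exactly where the peculiar formula $K=(7-L)/(1-L)$ comes from; in your direct approach that formula is merely \emph{sufficient} (any $K$ large enough that $1+cLK/(2\sqrt{2})\le K$ would do), so the phrase ``elementary algebra with the specific choice $K=(7-L)/(1-L)$ closes the induction'' hides that the value is not forced by your method. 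Your route is the more standard Banach-fixed-point presentation and is arguably cleaner; the paper's telescoping route has the advantage that the constant $K$ is manufactured by the argument rather than verified after the fact. A further small difference: your Step~1 (coercivity of the linear step via the identity $(w_x,(\varphi w)_{xx})=\tfrac32(w_x^2,\varphi_x)-\tfrac12(w^2,\varphi_{xxx})$ and Lax--Milgram) is something the paper leaves implicit; including it is a genuine improvement in rigor.
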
 
\begin{proof}
  From \eqref{eq:iteration scheme} we have
  \begin{equation}
    \label{eq:iterationdiff}
    \left(w^{\ell+1}-w^\ell, \varphi v\right) + \Dt \left( w^\ell w^\ell_x
      -w^{\ell-1}w^{\ell-1}_x , 
      \varphi v \right) + \Dt (w^{\ell+1}_x -w^\ell_x, (\varphi v)_{xx}) =
    0
  \end{equation}
  for any $v \in S_{\Dx}$.  We choose $v=w^{\ell+1}-w^\ell$ in
  \eqref{eq:iterationdiff} to get
  \begin{align*}
    \langle v,v\rangle_\test &+ \Dt \left(v_x,(\test v)_{xx}\right) =
    -\Dt \left(w^\ell w^\ell_x -
      w^{\ell-1}w^{\ell-1}_x,\varphi(w^{\ell+1} -w^\ell)\right)
    \\
    &\qquad \le \frac{1}2 \langle v,v\rangle_\test + \frac{\Dt^2}2
    \left(w^\ell w^\ell_x - w^{\ell-1}w^{\ell-1}_x,\test\left(w^\ell
        w^\ell_x - w^{\ell-1}w^{\ell-1}_x\right)\right),
  \end{align*}
  by Young's inequality. Therefore
  \begin{align*}
  &  \frac12\norm{v}^2_{2,\test} + \Dt
    \left(v_x,\left(v\test\right)_{xx}\right) \le \frac{\Dt^2}2 \int_\R
    \left( w^\ell w^\ell_x
      -w^{\ell-1}w^{\ell-1}_x \right)^2 \varphi\, dx
    \\
    &\quad =\frac{\Dt^2}{2}\int_\R \left(\left(w^\ell -
        w^{\ell-1}\right)w^\ell_x -
      w^{\ell-1}\left(w^\ell - w^{\ell-1}\right)_x\right)^2\varphi\,dx
    \\
    &\le \Dt^2\int_\R \left(w^\ell-w^{\ell-1}\right)^2
    \left(w^\ell_x\right)^2\varphi\,dx + \Dt\int_\R
    \left(w^{\ell-1}\right)^2\left(w^\ell_x-w^{\ell-1}_x\right)^2\varphi\,dx
    \\
    &\le \Dt^2 \norm{w^{\ell}_x}_{L^\infty(\R)}^2 \int_\R
    \left(w^\ell-w^{\ell-1}\right)^2\varphi\,dx + \Dt \norm{w^\ell_x -
      w^{\ell-1}_x}_{L^\infty(\R)}^2 \int_\R
    \left(w^{\ell-1}\right)^2\varphi\,dx
    \\
    &\le \frac{C\Dt^2}{\Dx^{3}} \norm{w^\ell}_{L^2(\R)}^2\int_\R
    \left(w^\ell-w^{\ell-1}\right)^2\varphi\,dx + \frac{C\Dt}{\Dx^{3}}
    \norm{w^\ell-w^{\ell-1}}_{L^2(\R)}^2 \int_\R \left( w^{\ell-1}\right)^2
    \varphi\,dx 
    \\
    &\le C_{R} \lambda^2 \max\seq{\norm{w^\ell\sqrt{\varphi}}_{L^2(\R)}^2,
      \norm{w^{\ell-1}\sqrt{\varphi}}_{L^2(\R)}^2} \int_\R
    \left(w^\ell-w^{\ell-1}\right)^2\varphi\,dx ,
  \end{align*}
    We have used the following ``inverse inequalities''; for any
  function $z\in S_{\Dx}$
  \begin{equation}
    \label{eq:inverse_inequal}
    \norm{z_x}_{L^\infty(\R)} \le \frac C{\Dx^{1/2}} \norm{z_x}_{L^2(\R)} \le
    \frac C{\Dx^{3/2}} \norm{z}_{L^2(\R)},
  \end{equation}
  where the constant $C$ is independent of $z$ and $\Dx$.
  To take care the second term on the left, we
  make an use of the following identity
  \begin{align}
    \label{eq:identity}
    \int_{\R}w_x\, (\varphi w)_{xx} \ dx = \frac32
    \int_{\R}w_x^2\varphi_{x}\ dx\, -\, \frac12
    \int_{\R}w^2\varphi_{xxx}\, dx,
  \end{align}
  which is established by repeated use of integration by parts.
  Thus
  \begin{multline*}
    \int_\R \left(w^{\ell+1}-w^\ell\right)_x
    ((w^{\ell+1}-w^\ell)\varphi)_{xx}\,dx \\
    =
    \frac{3}2\int_\R \left(w^{\ell+1}_x-w^\ell_x\right)^2 \varphi_x\,dx 
    -\frac{1}2 \int_\R \left(w^{\ell+1}-w^\ell\right)^2\varphi_{xxx}\,dx 
  \end{multline*}
  Since $\varphi_x\ge 0$, the second term after the above inequality is
  non-negative, and we have
  \begin{equation*}
    \Dt
    \left(v_x,\left(v\test\right)_{xx}\right) \ge -C_R \Dt\norm{v}_{L^2(\R)}^2
    \ge -C_R\Dt \norm{v}_{2,\test}^2,
  \end{equation*}
  where the constant $C_R$ depends on the $\test_{xxx}$. Collecting
  these bounds we get
    
  \begin{multline}
    \label{eq:l2_contr1}
    (1-\Dt C_R)\int_\R \left(w^{\ell+1}- w^\ell\right)^2 \varphi\, dx   \\
    \le C_{R} \lambda^2 \max\seq{\norm{w^\ell}_{2,\varphi}^2,
      \norm{w^{\ell-1}}_{2,\varphi}^2} \int_\R
    \left(w^\ell-w^{\ell-1}\right)^2\varphi\,dx
  \end{multline}
  We can always assume that $C_R\Dt<1/2$, thus
  \begin{equation}
    \label{iteration diff}
    \begin{aligned}
      &\norm{\left(w^{\ell+1}-w^\ell\right)}_{2,\varphi}^2 \\
      &\qquad\qquad \le 2C_{R} \lambda^2
      \max\seq{\norm{w^\ell}_{2,\varphi}^2,
        \norm{w^{\ell-1}}_{2,\varphi}^2}\norm{
        \left(w^{\ell}-w^{\ell-1}\right)}_{2,\varphi}^2.
    \end{aligned}
  \end{equation}
  For $w^1$, setting $v=w^1$ in \eqref{eq:iteration scheme}, we have
  \begin{align*}
    \int_\R \left(w^1\right)^2\varphi\,dx +& \Dt \int_\R w^1_x \left(\varphi
      w^1\right)_{xx}\,dx \\
    &= \int_\R
    w^1 \left(u^n - \Dt u^n u^n_x\right)  \varphi\,dx\\
    &\le \frac12 \int_\R \left(w^1\right)^2 \varphi\,dx +
    \frac{1}{2} \int_\R \left(u^n - \Dt u^n u^n_x\right)^2\varphi \,dx\\
    &\le \frac12 \int_\R \left(w^1\right)^2 \varphi\,dx + \int_\R
    \left(u^n\right)^2\varphi\,dx + \Dt^2 \int_\R \left(u^n
      u^n_x\right)^2\varphi\,dx.
  \end{align*}
  Therefore, using the inverse inequality \eqref{eq:inverse_inequal}
  and the identity \eqref{eq:identity}, we have
  \begin{align*}
    \frac12 \norm{w^1}_{2,\varphi}^2 &\le \norm{u^n}_{2,\varphi}^2 +
    \frac{\Dt}2 \int_\R \left(w^1\right)^2 \varphi_{xxx}\,dx + \Dt^2
    \norm{u^n_x}_{L^\infty(\R)}^2
    \int_\R  \left(u^n\right)^2  \varphi\,dx \\
    &\le \norm{u^n}_{2,\varphi}^2 + \frac{C_R\Dt}2
    \norm{w^1}_{2,\varphi}^2 +
    C_R\frac{\Dt^2}{\Dx^3}\norm{u^n}_{2,\varphi}^4,
  \end{align*}
  and thus
  \begin{equation}
    \label{eq:w1start}
    \norm{w^1}_{2,\varphi}^2\le 4\left(1+C_R\lambda^2\norm{u^n}_{2,\varphi}^2\right)\norm{u^n}_{2,\varphi}^2.
  \end{equation}
  Then we claim that the following holds for $\ell\ge 1$
  \begin{subequations}
    \label{eq:iteration}
    \begin{align}
      \norm{w^{\ell+1}-w^\ell}_{2,\varphi} &\le
      L\norm{w^{\ell}-w^{\ell-1}}_{2,\varphi}, \label{eq:iterationa}
      \\
      \norm{w^\ell}_{2,\varphi} &\le K
      \norm{u^n}_{2,\varphi},\label{eq:iterationb}\\
      \norm{w^1}_{2,\varphi} &\le 5\norm{u^n}_{2,\varphi},
      \label{eq:iterationc}
    \end{align}
  \end{subequations}
  for $\ell=1,2,3,\ldots$.  To prove these claims, we argue by
  induction. Setting $\ell=1$ in \eqref{iteration diff} and using
  \eqref{eq:w1start} gives
  \begin{align*}
   \norm{w^2-w^1}_{2,\varphi} &\le 2 \sqrt{C_R}\lambda
    \max\seq{\norm{w^1}_{2,\varphi},\norm{u^n}_{2,\varphi}}
    \norm{w^1-u^n}_{2,\varphi}\\
    &\le 
    \frac{2\sqrt{C_R}L}{2\sqrt{2}\sqrt{C_R}
      K\norm{u^n}_{2,\varphi}}\norm{u^n}_{2,\varphi} \\
    &\qquad \times
    4\left(1+
      \frac{\sqrt{C_R}L}{\sqrt{C_R}2\sqrt{2}K\norm{u^n}_{2,\varphi}}
      \norm{u^n}_{2,\varphi} 
    \right)
    \norm{w^1-u^n}_{2,\varphi}\\
    &\le
    \frac{4L}{7\sqrt{2}}\left(1+\frac{L}{14\sqrt{2}}\right)\norm{w^1-u^n}_{2,\varphi}
    \\
    &\le L \frac{4}{7\sqrt{2}}\left(1+\frac{1}{14\sqrt{2}}\right)
    \norm{w^1-u^n}_{2,\varphi}\\
    &\le 0.85 L \norm{w^1-u^n}_{2,\varphi},
  \end{align*}
  which shows \eqref{eq:iterationa} for $\ell=1$.  To show
  \eqref{eq:iterationc} note that
  \begin{equation*}
    4\left(1+\sqrt{C_R}\lambda\norm{u^n}_{2,\varphi}\right)\le
    4\left(1+\frac{1}{14\sqrt{2}}\right)<5.
  \end{equation*}
  Next assume that \eqref{eq:iterationa} and \eqref{eq:iterationb}
  hold for $\ell=1,\ldots,m$, 
  then
  \begin{align*}
    \norm{w^{m+1}}_{2,\varphi}&\le \sum_{\ell=0}^{m}
    \norm{w^{\ell+1}-w^\ell}_{2,\varphi} + \norm{w^0}_{2,\varphi}\\
    &\le \norm{w^1-w^0}_{2,\varphi}\sum_{\ell=0}^m L^\ell +
    \norm{w^0}_{2,\varphi}\\
    &\le 6 \norm{u^n}_{2,\varphi} \frac{1}{1-L} +
    \norm{u^n}_{2,\varphi}\\
    &=\frac{7-L}{1-L}\norm{u^n}_{2,\varphi}=K\norm{u^n}_{2,\varphi}.
  \end{align*}
  Hence, \eqref{eq:iterationb} holds for all $\ell$. Using
  \eqref{iteration diff}, this implies that \eqref{eq:iterationa}
  holds as well. Using \eqref{iteration diff}, one can show that
  $\{w^{\ell}\}$ is Cauchy, hence $\{w^{\ell}\}$ converges. This
  completes the proof.
\end{proof}

\begin{remark}
  Note that, we aim to prove that the iteration scheme
  \eqref{eq:iteration scheme} converges for all times $t_n = n
  \Dt$. We have already shown in previous section that the iteration
  scheme converges for one time step. However, we had to impose a CFL
  condition where the ratio between temporal and spatial mesh sizes
  must be smaller than an upper bound that depends on the computed
  solution at that time, i.e., $u^n$. Having said this, since we want
  the CFL-condition to only depend on the initial data
  $u_0$, we have to derive local a-priori bounds for the computed
  solution $u^n$. This will be done in the next section (cf.~bound
  \eqref{eq:l2_stability}).  This bound finally implies that the
  iteration scheme \eqref{eq:iteration scheme} converges for
  sufficiently small $\Dt$.

\end{remark}

\section{ Convergence}
\label{sec:convergence}
As we mentioned earlier, the convergence analysis  exploits the
fact that the solution of the KdV equation possesses an inherent
smoothing effect due to its dispersive character. In particular, we
need $H^1_{\mathrm{loc}}(\R)$ estimate of the approximate solution
generated by the scheme \eqref{eq:scheme}. We proceed with the
following Lemma.

\begin{lemma}
  \label{lemma2}
  We assume that $K$ and $L$ are given as in the hypothesis of Lemma
  \ref{lemma1}. We assume that the initial data $u_0\in L^2(\R)$. Let
  $u^n$ be the solution of the scheme \eqref{eq:scheme}. Then there
  exists a finite time $T$ and a constant $C$, depending only on
  $\norm{u_0}_{L^2(\R)}$, such that for all $n$ satisfying $n \Dt \le T$,
  the following estimate holds
  \begin{equation}
    \label{eq:l2_stability}
    \norm{u_n}_{L^2(\R)} \le C\left(\norm{u_0}_{L^2(\R)}\right)
  \end{equation}
  provided the following assumption holds
  \begin{equation}
    \label{CFL1}
    \lambda\le \frac{L}{\sqrt{C_R} 2\sqrt{2} K \sqrt{y_T}}
  \end{equation}
  for some $y_T$ which
  depends only on $\norm{u_0}_{L^2(\R)}$.  Furthermore, the approximation
  $u^n$ satisfies the following $H^1$ estimate
  \begin{equation}
    \label{eq:H1_estimate}
    \Dt\sum_{ n\Dt\le T} \norm{u_x^{n+1}}_{L^2([-R,R])}^2\le
    C\left(\norm{u^0}_{L^2(\R)},R\right), \qquad \text{for $n\Dt<T$,}
  \end{equation}
  where the constant $C$ depends only on $R$ and $\norm{u_0}_{L^2(\R)}$.
\end{lemma}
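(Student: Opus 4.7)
The plan is to derive a Kato-type weighted energy identity from the scheme \eqref{eq:scheme} by choosing the admissible test function $v=u^{n+1}\in S_\Dx$. Writing $u:=u^{n+1}$ for brevity, the elementary identity $a(a-b)=\tfrac12(a^2-b^2)+\tfrac12(a-b)^2$ converts the time-difference pair into $\tfrac12\|u\|_{2,\varphi}^2-\tfrac12\|u^n\|_{2,\varphi}^2+\tfrac12\|u-u^n\|_{2,\varphi}^2$. For the nonlinear term, expanding $(\varphi u)_x$ and integrating by parts once (no boundary terms since $u\in H^2(\R)$) gives $\bigl((u^2/2),(\varphi u)_x\bigr)=\tfrac13\int u^3\varphi_x\,dx$; the dispersive term is handled by the identity \eqref{eq:identity} already used in the proof of Lemma~\ref{lemma1}, producing the crucial positive ``Kato good term'' $\tfrac{3\Dt}{2}\int u_x^2\varphi_x\,dx$ (note $\varphi_x\ge 0$) together with an absorbable lower-order contribution involving $\varphi_{xxx}$. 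Assembling the pieces produces the energy identity
\begin{equation*}
\|u\|_{2,\varphi}^2-\|u^n\|_{2,\varphi}^2+\|u-u^n\|_{2,\varphi}^2+3\Dt\int u_x^2\varphi_x\,dx=\tfrac{2\Dt}{3}\int u^3\varphi_x\,dx+\Dt\int u^2\varphi_{xxx}\,dx.
\end{equation*}

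The term with $\varphi_{xxx}$ is immediately bounded by $C_R\Dt\|u\|_{2,\varphi}^2$ since $\varphi\ge 1$ and $|\varphi_{xxx}|\le C_R$. The cubic term is the main technical obstacle, and the reason that only a local-in-time bound can be expected: the weighted $L^2$ norm is not conserved by the scheme. Because $\varphi_x$ has compact support in a bounded interval $I$ of length of order $R$, I bound $|\int u^3\varphi_x\,dx|\le C_R\|u\|_{L^\infty(I)}\|u\|_{2,\varphi}^2$; the Gagliardo--Nirenberg inequality on $I$ gives $\|u\|_{L^\infty(I)}\le C_R\bigl(\|u\|_{L^2(I)}^{1/2}\|u_x\|_{L^2(I)}^{1/2}+\|u\|_{L^2(I)}\bigr)$, and Young's inequality with a small parameter $\epsilon$ permits absorbing the $\|u_x\|_{L^2(I)}$ factor into the Kato good term on the left (using $\varphi_x\ge c_R>0$ on the interior of $I$). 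What remains on the right is a superlinear polynomial $\Phi(y)$ in $y:=\|u\|_{2,\varphi}^2$, of order at most $y^{5/3}$.

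Setting $y_n:=\|u^n\|_{2,\varphi}^2$, the previous step yields a discrete inequality $y_{n+1}\le y_n+C\Dt\,\Phi(y_{n+1})$ with $\Phi$ superlinear, complemented by $y_0\le C_R\|u_0\|_{L^2(\R)}^2$ (since $u^0=Pu_0$ is the $L^2$-orthogonal projection). Comparison with the scalar ODE $y'=C\Phi(y)$, whose solution stays bounded on a finite interval $[0,T^*)$, delivers a horizon $T=T(\|u_0\|_{L^2(\R)})\in(0,T^*)$ and a constant $y_T$ for which $y_n\le y_T$ whenever $n\Dt\le T$; this is precisely \eqref{eq:l2_stability}. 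The induction must run simultaneously: at each step one propagates the bound $\|u^n\|_{2,\varphi}\le\sqrt{y_T}$ and uses it to verify the CFL hypothesis \eqref{eq:cfl} of Lemma~\ref{lemma1}, which is exactly the role of the data-only condition \eqref{CFL1}.

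Finally, for the $H^1_{\mathrm{loc}}$ estimate \eqref{eq:H1_estimate}, summing the energy identity from $n=0$ to $N-1$ with $N\Dt\le T$ telescopes the weighted $L^2$ terms and, after using the $L^2$ bound just established to control the right-hand side uniformly, gives
\begin{equation*}
c\,\Dt\!\sum_{n\Dt\le T}\!\int (u_x^{n+1})^2\,\varphi_x\,dx \;\le\; y_0+CT\max_{n\Dt\le T}\Phi(y_n)\;\le\; C(\|u_0\|_{L^2(\R)},R,T),
\end{equation*}
where $c>0$ is what remains of the coefficient $3$ after the $\epsilon$ portion has been used up by the cubic term. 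Since $\varphi_x\ge c_R>0$ on a neighborhood of $[-R,R]$ by the convolution construction of $\varphi$, this yields \eqref{eq:H1_estimate}. The main difficulty throughout is the cubic nonlinearity, whose superlinear contribution forces both the finite horizon $T=T(\|u_0\|_{L^2(\R)})$ and the delicate Young-type absorption into the Kato good term.
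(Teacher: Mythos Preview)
Your overall strategy coincides with the paper's: test with $v=u^{n+1}$, use \eqref{eq:identity} for the dispersive piece to produce the Kato good term, control the cubic term by a Gagliardo--Nirenberg/Young argument with partial absorption into that good term, compare the resulting superlinear recursion with an ODE, and finally sum over $n$ to obtain \eqref{eq:H1_estimate}.

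There is, however, a genuine gap in your treatment of the cubic term. You bound $\bigl|\int u^3\varphi_x\,dx\bigr|\le C_R\|u\|_{L^\infty(I)}\|u\|_{2,\varphi}^2$ with $I=\mathrm{supp}\,\varphi_x$, then apply Gagliardo--Nirenberg on $I$ to produce a factor $\|u_x\|_{L^2(I)}$ that you propose to absorb into $\int u_x^2\varphi_x\,dx$. But by construction $\varphi_x$ tapers from $1$ to $0$ on the boundary strips of $I$, so the Kato term only dominates $\|u_x\|_{L^2(J)}^2$ for $J\Subset I$, not $\|u_x\|_{L^2(I)}^2$; the observation ``$\varphi_x\ge c_R>0$ on the interior of $I$'' does not cover those strips, and the absorption fails as written. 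The paper avoids this mismatch by working with $u\sqrt{\varphi_x}$ directly: from $\sup_x v^2\le\tfrac12\int|v||v_x|$ with $v=u\sqrt{\varphi_x}$ one gets, after the cancellation $|u\sqrt{\varphi_x}|\cdot|u|\,|\varphi_{xx}|/(2\sqrt{\varphi_x})=\tfrac12|u^2\varphi_{xx}|$, a factor $\bigl(\int u_x^2\varphi_x\,dx\bigr)^{1/4}$, which then absorbs cleanly via Young's inequality $ab\le\tfrac14 a^4+\tfrac34 b^{4/3}$ and yields the $y^{5/3}$ term in the recursion. A second, smaller point: your recursion $y_{n+1}\le y_n+C\Dt\,\Phi(y_{n+1})$ is implicit in $y_{n+1}$, so direct comparison with $y'=C\Phi(y)$ is not immediate; the paper uses the quantitative output \eqref{eq:unp1bnd} of Lemma~\ref{lemma1}, namely $y_{n+1}\le K^2 y_n$, to make the right-hand side explicit (hence the ODE $y'=f(K^2 y)$), whereas your sketch invokes Lemma~\ref{lemma1} only to check the CFL condition.
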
 
\begin{proof}
We choose $v=u^{n+1}$ in \eqref{eq:scheme} to
obtain
\begin{equation*}
  \int_\R (u^{n+1})^2 \varphi \,dx \,+\, \Dt \int_\R u^{n+1}_x (\varphi
  u^{n+1})_{xx}\,dx =\int_\R u^n \varphi u^{n+1}\,dx \,-\, \Dt\int_\R
  (u^{n+1})^2u^{n+1}_x\varphi \,dx.
\end{equation*}
Using Cauchy-Schwartz inequality and the identity \eqref{eq:identity},
we have
\begin{multline}
  \label{eq:estimate1}
  \frac12\int _\R(u^{n+1})^2 \varphi dx + \Dt \int_\R u^{n+1}_x (\varphi
  u^{n+1})_{xx}dx\\
  \le \frac12 \int _\R (u^n)^2 \varphi dx +\frac{\Dt}{3} \int_\R (u^{n+1})^3
  \varphi_x dx.
\end{multline}
Next we estimate $\frac{\Dt}{3} \int_\R (u^{n+1})^3 \varphi_x dx$. To do
that, we make an use of the following identity
\begin{equation*}
  \sup_{x\in \R}v^2(x)\le \frac12 \int_\R |v(x)| |v_x(x)|dx
\end{equation*}
valid for $v\in H^1(\R)$.  Taking $v=u\sqrt{\varphi_x}$ in
we obtain
\begin{align*}
  \sup_x \abs{u\sqrt{\varphi_x}} & \le \frac1{\sqrt{2}} \Bigl( \int
    \abs{u\sqrt{\varphi_x}} \abs{(u\sqrt{\varphi_x})_x}\, dx
  \Bigr)^{\frac12}\\ 
  & \le \frac1{\sqrt{2}} \Bigl( \int_\R \abs{u u_x
      \varphi_x\,dx}\Bigr)^{\frac12} \,
  + \, \frac12  \Bigl( \int_\R \abs{u^2 \varphi_{xx}}\,dx\Bigr)^{\frac12}\\
  &\le \frac1{\sqrt{2}} \Bigl( \int_\R u_x^2 \varphi_x \,dx
  \Bigr)^{\frac14} \Bigl(\int_\R u^2 \varphi_x \,dx\Bigr)^{\frac14} +
  \frac12 \Bigr( \int_\R u^2 \abs{\varphi_{xx}}\,dx\Bigl)^{\frac12}
\end{align*}
by the Cauchy-Schwartz inequality.  Therefore
\begin{align*}
  \int_\R u^3\varphi_x \,dx   
  &\le\Bigl(\sup \abs{u\sqrt{\varphi_x}}\Bigr)  
  \int_\R u^2\sqrt{\phi_x} \,dx
  \\ 
  &\le \frac{1}{\sqrt{2}} \Bigl( \int_\R u_x^2 \varphi_x \,dx
  \Bigr)^{\frac14} \Bigl(\int_\R u^2 \varphi_x \,dx\Bigr)^{\frac14}
  \Bigl(\int_\R u^2\sqrt{\phi_x} \,dx \Bigr)
  \\
  &\qquad + \frac12 \Bigl(\int_\R u^2\sqrt{\phi_x} \,dx \Bigr)
  \Bigl(\int_\R u^2 \abs{\varphi_{xx}}\,dx\Bigr)^{\frac12}.
\end{align*}
Applying  Young's inequality $ab \le \tfrac{1}{4}a^4+
\tfrac34 b^{4/3}$ for non-negative numbers $a$ and $b$, we
obtain 
\begin{multline}
  \label{eq:inequalitya}
  \frac13\int_\R u^3\varphi_x \,dx \le \frac1{12 \sqrt{2}} \int_\R u_x^2
  \varphi_x \,dx +\frac1{4\sqrt{2}} \Bigl(\int_\R u^2 \varphi_x
  \,dx\Bigr)^{\frac13} \Bigl(\int_\R u^2\sqrt{\phi_x} \,dx \Bigr)^\frac43
  \\
  + \frac16 \Bigl(\int_\R u^2\sqrt{\phi_x} \,dx \Bigr) \Bigl( \int_\R u^2
  \abs{\varphi_{xx}}\,dx\Bigr)^{\frac12} .
\end{multline}
  Replacing the last term of \eqref{eq:estimate1} by the above
inequality \eqref{eq:inequalitya}, and using the identity
\eqref{eq:identity} for the second term of \eqref{eq:estimate1} gives
\begin{equation}
  \label{Nrgy1}
  \begin{multlined}
    \!\!\!\!\!\!\!\!\!
    \frac12\int_\R \left(u^{n+1}\right)^2 \varphi \,dx  +   \Bigl(\frac32
    - \frac1{12\sqrt{2}}\Bigr)\Dt \int_\R \left(u^{n+1}_x
    \right)^2\varphi_{x}\,dx
    \\
   \le \frac12 \int_\R \left(u^n\right)^2 \varphi \,dx
    +\frac{\Dt}{4\sqrt{2}} \Bigl(\int_\R \left(u^{n+1}\right)^2 
    \varphi_x \,dx\Bigr)^{\frac13} 
    \Bigl(\int_\R \left(u^{n+1}\right)^2\sqrt{\phi_x} \,dx\Bigr)^\frac43   
    \\
    +\frac{\Dt}{6}\Bigl(\int_\R \left(u^{n+1}\right)^2\sqrt{\phi_x}\, dx
    \Bigr) \Bigl( \int_\R \left(u^{n+1}\right)^2\abs{\varphi_{xx}}\,dx\Bigr)^\frac12\\
    + \frac{\Dt}2 \int_\R \left(u^{n+1}\right)^2
    \abs{\varphi_{xxx}}\,dx.
 \end{multlined}
\end{equation}
As the derivatives of $\varphi$ are bounded by the constant $C_R$, the
derivatives of $\varphi^{(j)}(x)\le \varphi(x)$ for $j=1,2,3$. Thus, from
\eqref{Nrgy1}, we obtain
\begin{equation}
  \label{Nrgy2}
  \begin{aligned}
    \int_\R \left(u^{n+1}\right)^2 \varphi\, dx \le  \int_\R
    \left(u^n\right)^2
    \varphi\,& dx 
    + \Dt \,C_R\Bigl[\Bigl(\int_\R \left(u^{n+1}\right)^2
    \varphi\,dx\Bigr)^{\frac53}
    \\
    & + \Bigl(\int_\R \left(u^{n+1}\right)^2 \varphi\,
        dx\Bigr)^{\frac32} + \Bigl(\int_\R \left(u^{n+1}\right)^2 \varphi\,
        dx\Bigr)\Bigr].
  \end{aligned}
\end{equation}
We have ignored the second term in the inequality \eqref{Nrgy1}
since the coefficient $\Dt(\tfrac32 - \tfrac1{12\sqrt{2}})$ is positive.
Setting $a_n=\int (u^n)^2 \varphi\, dx$ in \eqref{Nrgy2} gives
\begin{equation}
  \label{ode}
  a_{n+1} \le a_{n} +\Dt \,  f(a_{n+1})
\end{equation}
where the function $f$ is given by
\begin{align*}
  f(a)=C_R \, \left[ a^{\frac53} + a^{\frac32} +a\right].
\end{align*}
Therefore, $\{a_n\}$ solves the implicit Backward Euler method for the
following differential inequality
$$ \frac{da}{dt}\le f(a).$$
Thus we consider the following ordinary differential equation
\begin{align*}
  \begin{cases}
    \frac{dy}{dt}=f(K^2 y), &t>0,\\ y(0)=a_0.
  \end{cases}
\end{align*}
Since the function $f$ is locally Lipschitz continuous for positive
arguments, this differential equation has a unique solution which
blows up at some finite time, say at $t=T^{\infty}$. We choose
$T=T^{\infty}/2$. Also, note that the solution $y(t)$ of the above
differential equation is \emph{strictly-increasing} and \emph{convex}.
Next we compare the solution of this ODE with \eqref{ode} under the
assumption that \eqref{CFL1} holds.

Next we claim that $a_n\le y(t_n)$ for all $n\geq 0$.  We argue by
induction. Since $y(0)=a_0$, the claim follows for $n=0$.
We assume that the claim holds for $n=0,1,2,...,m$. As $0<a_m\le
y(T)$, \eqref{CFL1} implies that $\lambda$ satisfies the CFL
condition \eqref{eq:cfl}. So, from Lemma~\ref{lemma1}, we have
$a_{m+1}\le K^2 a_m$.

Then, using the convexity of $f$ we have
\begin{align*}
  a_{m+1}& \le a_{m} +\Dt \, f(K^2 a_m)\\
  &\le y(t_m)+\Dt f(K^2 y(t_m))\\
  &\le y(t_m)+\Dt \frac{dy}{dt}\bigm|_{t=t_m} \le
  y(t_{m+1}).
\end{align*}
This proves the claim. Therefore, as $\varphi\geq 1$, we have the
required $L^2$-stability estimate
$$\norm{u^n}_{L^2(\R)}\le \sqrt{y(T)}\le C\left(\norm{u^0}_{L^2(\R)},R\right).$$
Therefore, summing \eqref{Nrgy1} over $n$, we obtain
\begin{align*}
  \Dt\sum_{n\Dt\le T}\int_{-R}^R \abs{u^{n+1}_x}^2dx \le C(R,
  \norm{u_0}_{L^2(\R)}).
\end{align*}
This proves \eqref{eq:H1_estimate} and
completes the proof of Lemma~\ref{lemma2}.
\end{proof}

\subsection{Bounds on temporal derivative}
Next, we estimate the temporal derivative of the approximate solution.
In doing so, we need the following lemma which some bounds on
a weighted-$L^2$ projection on the space of $S_{\Dx}$ corresponding
to the weight function $\varphi$.
\begin{lemma}
  Let $\psi \in C_c^{\infty}(-R,R)$. Then there exists a projection
  $P:C^\infty_c(-R,R)\to S_\Dx\cap C_c(-R,R)$ such that
  \begin{align*}
    \int_\R u P(\psi) \varphi\,dx=\int_\R u \psi \varphi \,dx\qquad
    \text{for all $u\in S_\Dx$.}
  \end{align*}
  In addition, $P$ satisfies the following
  bounds
  \begin{equation}
    \label{eq:projection}
    \begin{aligned}
      \begin{cases}
        \norm{P(\psi)}_{L^2(\R)}\le C\norm{\psi}_{L^2(\R)},\\
        \norm{P(\psi)}_{H^1(\R)}\le C\norm{\psi}_{H^1(\R)},\\
        \norm{P(\psi)}_{H^2(\R)}\le C\norm{\psi}_{H^2(\R)}
      \end{cases}
    \end{aligned}
  \end{equation}
  where the constant $C$ is independent of $\Dx$.
\end{lemma}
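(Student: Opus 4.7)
The plan is to construct $P(\psi)$ as the Galerkin projection of $\psi$ with respect to the weighted inner product $\langle\cdot,\cdot\rangle_\varphi$, taken over a suitably localized subspace of $S_\Dx$, and then to obtain the three stability estimates via the equivalence of the weighted and standard $L^2$ norms together with classical finite element approximation theory. Because $1\le\varphi\le C_R$ on all of $\R$, the weighted norm is uniformly equivalent to the usual $L^2$ norm, and this equivalence will be used freely without tracking the resulting constants.

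First I would introduce the finite-dimensional subspace
$S_\Dx^R=\seq{v\in S_\Dx : \supp v\subset [-R,R]}$
and define $P(\psi)$ as the unique element of $S_\Dx^R$ satisfying $\langle u,P(\psi)\rangle_\varphi=\langle u,\psi\rangle_\varphi$ for every $u\in S_\Dx^R$; existence and uniqueness are immediate since $\langle\cdot,\cdot\rangle_\varphi$ is positive definite on this finite-dimensional space. The extension of the orthogonality condition to all $u\in S_\Dx$ would then be argued from the locality of the piecewise polynomial basis of $S_\Dx$: a basis function whose support is disjoint from $[-R,R]$ contributes zero on both sides, while basis functions straddling the boundary are absorbed by enlarging $S_\Dx^R$ by a fixed (mesh-independent) number of cells, which does not affect the final bounds since the constants remain $\Dx$-independent.

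For the $L^2$ estimate, I would test the defining identity with $u=P(\psi)$ and apply Cauchy--Schwarz in the weighted inner product:
\begin{equation*}
\norm{P(\psi)}_{2,\varphi}^{2}=\langle P(\psi),\psi\rangle_\varphi\le \norm{P(\psi)}_{2,\varphi}\,\norm{\psi}_{2,\varphi},
\end{equation*}
which gives $\norm{P(\psi)}_{L^2(\R)}\le C\norm{\psi}_{L^2(\R)}$ after using the norm equivalence. For the $H^1$ and $H^2$ bounds I would compare $P(\psi)$ with a standard quasi-interpolant $I_\Dx\psi\in S_\Dx^R$ (a Scott--Zhang operator or a B-spline projection) satisfying the classical estimates $\norm{I_\Dx\psi}_{H^k(\R)}\le C\norm{\psi}_{H^k(\R)}$ and $\norm{\psi-I_\Dx\psi}_{L^2(\R)}\le C\Dx^{k}\abs{\psi}_{H^k(\R)}$ for $k=0,1,2$. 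Since $P(\psi)-I_\Dx\psi\in S_\Dx^R$, the inverse inequality $\norm{z}_{H^k}\le C\Dx^{-k}\norm{z}_{L^2}$ combined with the best-approximation property of $P$ in the weighted inner product yields
\begin{equation*}
\norm{P(\psi)-I_\Dx\psi}_{H^k}\le C\Dx^{-k}\bigl(\norm{P(\psi)-\psi}_{L^2}+\norm{\psi-I_\Dx\psi}_{L^2}\bigr)\le C\abs{\psi}_{H^k},
\end{equation*}
after which the desired $H^k$ bound on $P(\psi)$ follows from the triangle inequality.

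The main obstacle, in my view, is reconciling the localization with the orthogonality statement ``for all $u\in S_\Dx$''. The global weighted $L^2$ projection onto $S_\Dx$ does not preserve compact support, so one cannot simply take the full projection; conversely, the Galerkin projection onto $S_\Dx^R$ is only orthogonal to $S_\Dx^R$ by construction. Bridging this gap is a careful bookkeeping argument depending on the precise local structure of the chosen basis of $S_\Dx$, and I would expect this to be the only point in the proof that is not immediate from standard finite element theory; every other step is routine once the correct subspace has been identified.
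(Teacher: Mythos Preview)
Your proposal is correct and is precisely the ``easy adaptation of the classical $L^2$ projection results'' that the paper invokes: the paper's own proof consists of a single sentence referring to Ciarlet~\cite{ciarlet} and gives no further details. Your argument---weighted Galerkin projection, $L^2$ stability by testing with $P(\psi)$, and $H^k$ stability via inverse inequality plus a quasi-interpolant---is the standard route, and your flagging of the tension between compact support and orthogonality against all of $S_\Dx$ is exactly the bookkeeping point that has to be addressed but that the paper leaves implicit.
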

\begin{proof}
  This proof is an easy adaptation of the classical $L^2$ projection
  results, see the monograph of Ciarlet \cite{ciarlet}.
\end{proof}
\begin{lemma}
  \label{lemma3}
  Let $\{u_n\}$ be the solution of the scheme \eqref{eq:scheme}. We
  also assume that the hypothesis of Lemma~\ref{lemma2} hold. Then
  the following estimate holds
  \begin{equation}
    \label{eq:time estimate}
    \norm{D_t^+(u^n\varphi)}_{H^{-2}([-R,R])}\le C(\norm{u_0}_{L^2(\R)},R)\,
    \left(\norm{u^{n+1}_x}_{L^2([-R,R])}+1\right), 
  \end{equation}
  where $D_t^+u^n$ is the forward time difference given by
  \begin{equation*}
    D_t^+u^n=\frac{u^{n+1}-u^n}{\Dt}.   
  \end{equation*}
\end{lemma}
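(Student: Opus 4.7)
The plan is to test the scheme against a suitable projection of a generic $\psi\in C_c^\infty(-R,R)$ and then track carefully how many derivatives land on $\psi$. By duality, to bound $\norm{D_t^+(u^n\varphi)}_{H^{-2}([-R,R])}$ it suffices to bound
\begin{equation*}
  \int_\R D_t^+(u^n)\,\varphi \,\psi\,dx
  \ =\ \int_\R \frac{u^{n+1}-u^n}{\Dt}\,\varphi\,\psi\,dx
\end{equation*}
uniformly in $\psi$ with $\norm{\psi}_{H^2}\le 1$. Since $\psi$ itself is not in $S_\Dx$, the key trick is to replace $\psi$ by $P(\psi)\in S_\Dx\cap C_c(-R,R)$, which is legal because $u^{n+1}-u^n\in S_\Dx$ and the preceding projection lemma was designed precisely to give the identity $\int (u^{n+1}-u^n)\varphi\psi\,dx = \int(u^{n+1}-u^n)\varphi\,P(\psi)\,dx$.

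Having reduced to a test function inside $S_\Dx$, I would plug $v=P(\psi)$ directly into the scheme \eqref{eq:scheme} to obtain
\begin{equation*}
  \int_\R D_t^+(u^n)\,\varphi\,\psi\,dx
  \ =\ \int_\R \tfrac12 (u^{n+1})^2\,(\varphi P(\psi))_x\,dx
       \ -\ \int_\R u^{n+1}_x\,(\varphi P(\psi))_{xx}\,dx.
\end{equation*}
Now each term reduces to a routine estimate. For the nonlinear term I would use $\norm{(u^{n+1})^2}_{L^1(\R)}=\norm{u^{n+1}}_{L^2(\R)}^2$, which is controlled by a constant depending only on $\norm{u_0}_{L^2(\R)}$ thanks to \eqref{eq:l2_stability}, and combine it with $\norm{(\varphi P(\psi))_x}_{L^\infty(\R)}\le C_R\,\norm{P(\psi)}_{W^{1,\infty}(\R)}\le C_R\,\norm{P(\psi)}_{H^2(\R)}\le C\,\norm{\psi}_{H^2(\R)}$ via the one-dimensional Sobolev embedding $H^1\hookrightarrow L^\infty$ and the projection bounds \eqref{eq:projection}. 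For the dispersive term I would use Cauchy--Schwarz on $[-R,R]$ (which is legitimate since $P(\psi)$ is supported in $(-R,R)$), giving the bound
\begin{equation*}
  \Bigl|\int_\R u^{n+1}_x\,(\varphi P(\psi))_{xx}\,dx\Bigr|
  \ \le\ \norm{u^{n+1}_x}_{L^2([-R,R])}\,
         \norm{(\varphi P(\psi))_{xx}}_{L^2(\R)}\,,
\end{equation*}
and the second factor is $\le C_R\norm{\psi}_{H^2}$ again by expanding with the product rule and applying \eqref{eq:projection}.

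Adding the two contributions yields $|\int D_t^+(u^n)\varphi\,\psi\,dx|\le C(\norm{u_0}_{L^2(\R)},R)(1+\norm{u^{n+1}_x}_{L^2([-R,R])})\,\norm{\psi}_{H^2}$, which on taking the supremum over $\psi$ is exactly \eqref{eq:time estimate}. The main obstacle, conceptually, is the mismatch between the natural test space $H^2_0([-R,R])$ and the discrete space $S_\Dx$; this is precisely circumvented by the weighted-$L^2$ projection $P$ introduced just before the lemma, so with that tool in hand the argument is quite clean. A minor technical point to watch is that $\varphi$ is not compactly supported, but because $P(\psi)$ is supported in $(-R,R)$, every integral reduces to an integral over $[-R,R]$, so the $H^1$ norm of $u^{n+1}$ only enters locally and \eqref{eq:H1_estimate} controls it after summation in $n$.
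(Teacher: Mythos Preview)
Your proposal is correct and follows essentially the same route as the paper: replace $\psi$ by the weighted projection $P(\psi)\in S_\Dx\cap C_c(-R,R)$, substitute into the scheme, then bound the nonlinear term via $\norm{u^{n+1}}_{L^2(\R)}^2$ together with a Sobolev embedding on $P(\psi)$, and the dispersive term by Cauchy--Schwarz on $[-R,R]$. The only cosmetic difference is that the paper expands $(\varphi P(\psi))_x$ by the product rule and estimates the two pieces separately, whereas you treat $(\varphi P(\psi))_x$ as a single factor; the content is identical.
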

\begin{proof}
  Using the definition of $D_t^+u^n$, we rewrite the scheme
  \eqref{eq:scheme} as
  \begin{equation}
    \label{eq:schemea}
    \left(D_t^+u^n, \varphi v\right) - 
    \left(\frac{(u^{n+1})^2}{2}, (\varphi v)_x\right)
    +  \left(u^{n+1}_x,(\varphi v)_{xx}\right)  = 0,
  \end{equation}
  which holds for all $v\in S_{\Dx}$.  Let $\psi\in C^\infty_c(-R,R)$
  and choose $v=P(\psi)$ in
  \eqref{eq:schemea}, to  obtain
  \begin{equation*} 
    (D_t^+u^n, \varphi P(\psi)) + \left(u^{n+1}u^{n+1}_x, \varphi
      P(\psi)\right) + (u^{n+1}_x,(\varphi P(\psi))_{xx}) = 0.
  \end{equation*}
  The second and third terms of the above identity 
  can be estimated as follows. Using the bounds
  \eqref{eq:projection} and the Sobolev inequality we get
  \begin{align*}
    -\int_\R & \left((u^{n+1})^2\right)_x \varphi P(\psi)\, dx = \int_\R
    \left(u^{n+1}\right)^2 \varphi_x P(\psi) \,dx + \int_\R
    \left(u^{n+1}\right)^2\varphi P(\psi)_x \,dx
    \\
    &\le \left(\norm{P(\psi)}_{L^\infty([-R,R])}  +
    \norm{P(\psi)_x}_{L^\infty([-R,R])}(2R+1)\right) \int_{-R}^R\left(u^{n+1}\right)^2\,dx
    \\
    & \quad \le \left(\norm{P(\psi)}_{H^1([-R,R])} +
      \norm{P(\psi)_x}_{H^1([-R,R])}(2R+1)\right) \norm{u^{n+1}}_{L^2(\R)}^2
    \\
    & \qquad \le C\left(\norm{u_0}_{L^2(\R)},R\right) \norm{\psi}_{H^2([-R,R])},
  \end{align*}
  and
  \begin{align*}
    -\int_\R u^{n+1}_x(\varphi P(\psi))_{xx} \,dx &\le \norm{u^{n+1}_x}_{L^2([-R,R])}
    \norm{(\varphi P(\psi))_{xx}}_{L^2(\R)}
    \\ 
    &\le C(\norm{u_0}_{L^2(\R)},R)\, \norm{u^{n+1}_x}_{L^2([-R,R])}\,
    \norm{\psi}_{H^2(\R)}.
  \end{align*}
  Therefore
  \begin{align*}
    \Bigl|\int_\R D_t^+u^n\varphi \psi \,dx\Bigr| &= \Bigl|\int_\R
    D_t^+u^n\varphi P(\psi)\,dx \Bigr|
    \\
    &\le
    C(\norm{u_0}_{L^2(\R)},R) \left(\norm{u^{n+1}_x}_{L^2([-R,R])}+1\right)
    \norm{\psi}_{H^2(\R)},
  \end{align*}
  which completes the proof.
\end{proof}
Before stating the theorem of convergence, we define the weak solution of the Cauchy problem \eqref{eq:main} as follows.
\begin{definition}
  Let $Q$ be a given positive number. Then $u\in L^2(0,T; H^1(-Q,Q))$ is said
  to be a weak solution of \eqref{eq:main} in the interval $(-Q,Q)$ if
  \begin{equation}\label{weak solution}
    \int_0^T \int_{-\infty}^{\infty} \Bigl(\phi_tu + \phi_x
    \frac{u^2}{2} - \phi_{xx}u_x \Bigr)\,dxdt + \int_{-\infty}^{\infty}
    \phi(x,0)u_0(x)\,dx =0.
  \end{equation}
  for all $\phi\in C^{\infty}_c\left((-Q,Q)\times[0,T)\right)$.
\end{definition}

Next we define the approximation $u^{\Dx}$
as,
\begin{equation}
  \label{eq:approx}
  u^{\Dx}(x,t)=u^n(x) \, + \, (t-t_n) D_t^+u^n, \, \, \, t_n\le t< t_{n+1}.
\end{equation}
Then we have the following theorem for convergence.
\begin{theorem}
  \label{theo:main}
  Let $\{u^n\}_{n\in\N}$ be a sequence of functions defined by the
  scheme \eqref{eq:scheme}, and assume that $\norm{u_0}_{L^2(\R)}$ is
  finite. Assume furthermore that $\Dt= \mathcal{O}(\Dx^2)$, then
  there exists a constant $C$ (depends only on $R$ and $\norm{u_0}_{L^2(\R)}$)
  such that
  \begin{align}
    \label{eq:aaa}\norm{u^{\Dx}}_{L^\infty(0,T;L^2([-R,R]))}\le C(R,\norm{u_0}_{L^2(\R)} ),\\
    \label{eq:bbb}\norm{u^{\Dx}}_{L^2(0,T;H^1([-R,R]))}\le C(R,\norm{u_0}_{L^2(\R)}),\\
    \label{eq:ccc} \norm{\partial_t
      (u^{\Dx}\varphi)}_{L^2(0,T;H^{-2}([-R,R]))}\le C(R,\norm{u_0}_{L^2(\R)})
  \end{align}
  where $u^{\Dx}$ is given by \eqref{eq:approx}. Moreover, there
  exists a sequence of $\{\Dx_j\}_{j=1}^{\infty}$ with
  $\lim_{j\rightarrow \infty}$ and a function $u\in
  L^2(0,T;L^2([-R,R]))$ such that
  \begin{equation}\label{eq:convergence}
    u^{\Dx_j}\rightarrow u \text{  strongly in  } L^2(0,T;L^2([-R,R])),
  \end{equation}
  as $j$ goes to infinity. The function $u$ is a weak solution of
  the Cauchy problem \eqref{eq:main}, that is, it satisfies
 \eqref{weak solution} with $Q=R-1$.
\end{theorem}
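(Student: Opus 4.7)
The plan is to split the theorem into three parts and handle them in sequence. First I would derive the three uniform bounds \eqref{eq:aaa}--\eqref{eq:ccc}, then apply the Aubin--Simon compactness criterion announced in the introduction, and finally identify the limit as a weak solution by testing the scheme against a carefully chosen sequence of discrete test functions.

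For the uniform bounds, since $u^{\Dx}(\cdot,t)$ is a convex combination of $u^n$ and $u^{n+1}$ on each slab $[t_n,t_{n+1}]$, the $L^\infty L^2$ bound \eqref{eq:aaa} is immediate from the $L^2$-stability estimate \eqref{eq:l2_stability} of Lemma~\ref{lemma2}. For \eqref{eq:bbb}, I would expand $u^{\Dx}_x = \alpha u^n_x + (1-\alpha)u^{n+1}_x$ with $\alpha=(t_{n+1}-t)/\Dt$, square, and integrate to bound $\int_0^T\norm{u^{\Dx}_x}^2_{L^2([-R,R])}\,dt$ by $2\Dt\sum_n(\norm{u^n_x}^2+\norm{u^{n+1}_x}^2)$; the $u^{n+1}$ sum is controlled by \eqref{eq:H1_estimate} and the stray term at $n=0$ is absorbed using the inverse inequality together with $\Dt=\mathcal{O}(\Dx^2)$. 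For \eqref{eq:ccc}, note that $\partial_t u^{\Dx}=D_t^+u^n$ on $(t_n,t_{n+1})$ and $\varphi$ is time-independent, so $\int_0^T\norm{\partial_t(u^{\Dx}\varphi)}_{H^{-2}}^2\,dt = \Dt\sum_n\norm{D_t^+(u^n\varphi)}_{H^{-2}([-R,R])}^2$, and Lemma~\ref{lemma3} together with \eqref{eq:H1_estimate} finishes the job.

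With these bounds in hand, $u^{\Dx}\varphi$ lies in the space $W$ from the introduction, and Aubin--Simon yields a subsequence $\{u^{\Dx_j}\varphi\}$ converging strongly in $L^2(0,T;L^2([-R,R]))$. Since $1\le\varphi\le 1+2R$ on $[-R,R]$, dividing by $\varphi$ gives the strong convergence \eqref{eq:convergence} to some $u\in L^2(0,T;H^1([-R,R]))$ (also weak convergence of $u^{\Dx_j}_x$ to $u_x$ in $L^2$ by \eqref{eq:bbb}).

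To identify $u$ as a weak solution, fix $\phi\in C^\infty_c((-Q,Q)\times[0,T))$ with $Q=R-1$ and set $\psi^n(x)=\phi(x,t_n)/\varphi(x)$, which is smooth and compactly supported since $\varphi\ge 1$ is smooth. Choosing $v=P(\psi^n)\in S_\Dx$ in \eqref{eq:scheme} and using the defining property $(u^k,\varphi P(\psi^n))=(u^k,\phi(\cdot,t_n))$, I would sum over $n=0,\dots,N-1$. Summation by parts in time converts $\sum_n(u^{n+1}-u^n,\phi(\cdot,t_n))$ into $-\sum_n\Dt(u^n,D_t^-\phi(\cdot,t_n))-(u^0,\phi(\cdot,0))$ (the boundary term at $t_{N-1}$ vanishes for $\Dt$ small by compact support). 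For the nonlinear and dispersive terms I would write $(\varphi P(\psi^n))_x=\phi_x(\cdot,t_n)+(\varphi(P(\psi^n)-\psi^n))_x$, so the leading parts become Riemann sums for $\int_0^T((u^{n+1})^2/2,\phi_x)\,dt$ and $\int_0^T(u^{n+1}_x,\phi_{xx})\,dt$, while the remainders vanish in the limit from standard $L^2$-projection error estimates $\norm{P(\psi)-\psi}_{H^k}\le C\Dx^{r+1-k}\norm{\psi}_{H^{r+1}}$ ($k=0,1,2$), using $r\ge 2$. Passing to the limit along $\Dx_j\to 0$: the temporal and dispersive terms converge by weak $L^2$-convergence of $u^{\Dx_j}$ and $u^{\Dx_j}_x$, and the crucial nonlinear term $\int\!\int (u^{\Dx_j})^2\phi_x\,dx\,dt\to\int\!\int u^2\phi_x\,dx\,dt$ requires the \emph{strong} $L^2$-convergence from \eqref{eq:convergence}. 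Since $u^0=Pu_0\to u_0$ in $L^2$, we recover \eqref{weak solution}.

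The main obstacle is the passage to the limit in the nonlinear term: everything is engineered to provide exactly the strong $L^2_{\rm loc}$-compactness needed to handle $(u^{\Dx})^2$, and the Kato-type $H^1_{\rm loc}$ estimate \eqref{eq:H1_estimate} combined with the $H^{-2}$ bound on the time derivative is precisely what feeds Aubin--Simon. A secondary technical difficulty is controlling the projection errors $(\varphi(P(\psi^n)-\psi^n))_x$ and $(\varphi(P(\psi^n)-\psi^n))_{xx}$ when multiplied against $u^{\Dx}_x$; this requires the full strength of the $H^2$-stability of $P$ in \eqref{eq:projection} together with the higher polynomial degree $r\ge 2$.
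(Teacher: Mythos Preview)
Your proposal is correct and follows the paper's overall architecture: the derivation of the three uniform bounds \eqref{eq:aaa}--\eqref{eq:ccc} and the application of Aubin--Simon to $\varphi u^{\Dx}$ are exactly what the paper does, down to the use of the inverse inequality to absorb the stray $\Dt\norm{u^0_x}_{L^2([-R,R])}^2$ term.

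Where you diverge is in the identification of the limit as a weak solution. You substitute $v=P(\phi(\cdot,t_n)/\varphi)$ into the scheme, perform a discrete summation by parts in $n$, and then pass Riemann sums to integrals. The paper instead keeps a generic smooth test function $v$, works with the \emph{continuous-time} interpolant $u^{\Dx}$, and shows that
\[
\int_0^T\!\!\int_\R \Bigl(u^{\Dx}_t\varphi v - \tfrac{(u^{\Dx})^2}{2}(\varphi v)_x + u^{\Dx}_x(\varphi v)_{xx}\Bigr)\,dx\,dt = o(1)
\]
by decomposing the discrepancy with the scheme into five error terms $\mathcal{E}^{1,n}_{\Dx},\ldots,\mathcal{E}^{5,n}_{\Dx}$: three coming from replacing the standard $L^2$-projection $\mathcal{P}v$ by $v$, and two coming from replacing $u^{n+1}$ by $u^{\Dx}$ in the nonlinear and dispersive terms. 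Only after passing to the limit does the paper set $v=\phi/\varphi$. Your route is slightly leaner in that the summation-by-parts step absorbs what the paper treats as the separate errors $\mathcal{E}^{4,n}_{\Dx}$ and $\mathcal{E}^{5,n}_{\Dx}$; on the other hand, the paper's organization makes the role of each estimate (the $H^{-2}$ bound, the local $H^1$ bound, the projection error) more transparent term by term. One small point you should make explicit: the approximation estimate $\norm{P\psi-\psi}_{H^k}\le C\Dx^{r+1-k}\norm{\psi}_{H^{r+1}}$ for the \emph{weighted} projection is not stated in the paper's lemma (which only gives stability); it follows from that stability combined with best-approximation in $S_{\Dx}$, but you should say so.
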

\begin{proof}
  We write the approximation $u^{\Dx}$ as, for $t_n\le t<t_{n+1}$
  \begin{align*}
    u^{\Dx}(x,t)=(1-\alpha_n(t)) u^n(x) \, + \, \alpha_n(t)
    u^{n+1}(x),
  \end{align*}
  where $\alpha_n(t)=(t-t_n)/{\Dt}\in [0,1]$. Therefore, we have
  \begin{align*}
    \norm{u^{\Dx}}_{L^2(\R)}\le \norm{u^n}_{L^2(\R)}+ \norm{u^{n+1}}_{L^2(\R)}.
  \end{align*}
  Thus, using \eqref{eq:l2_stability}, we conclude that \eqref{eq:aaa}
  holds.
 
  To prove \eqref{eq:bbb}, we calculate, for $t\in [t_n, t_{n+1})$
  \begin{align*}
    \norm{u^{\Dx}_x}_{L^2([-R,R])}^2\le 2(1-\alpha_n(t))^2
    \norm{u^n_x}_{L^2([-R,R])}^2\, + \, 2\alpha_n(t)^2
    \norm{u^{n+1}_x}_{L^2([-R,R])}^2.
  \end{align*}
  Thus,
  \begin{align*}
    \int_0^T \norm{u^{\Dx}_x}_{L^2([-R,R])}^2 dt&\le  2 \int_0^T (1-\alpha_n(t))^2 \norm{u^n_x}_{L^2([-R,R])}^2 \,dt\, \\
    & \qquad \qquad + \, 2 \int_0^T\alpha_n(t)^2 \norm{u^{n+1}_x}_{L^2([-R,R])}^2 \, dt\\
    &= 2 \sum_{n=0}^{N-1}\norm{u^n_x}_{L^2([-R,R])} ^2\int_{t_n}^{t_{n+1}}(1-\alpha_n(t))^2dt\\
    &\hspace{2cm} +\,2 \sum_{n=0}^{N-1}\norm{u^{n+1}_x}_{L^2([-R,R])}^2 \int_{t_n}^{t_{n+1}}\alpha_n(t)^2 \, dt \\
    &\le\Dt \sum_{n=0}^{N-1}\norm{u^n_x}_{L^2([-R,R])} \, +\Dt \sum_{n=0}^{N-1}\norm{u^{n+1}_x}_{L^2([-R,R])}\\
    &\le \Dt \norm{u^0_x}^2_{L^2([-R,R])} \, + \, 2 \Dt \sum_{n=0}^{N-1}\norm{u^{n+1}_x}_{L^2([-R,R])}\\
    & \le C(\norm{u_0}_{L^2(\R)}, R)
  \end{align*}
  where $N$ satisfies $N\Dt=T$. Here we have used the inverse
  inequality \eqref{eq:inverse_inequal} for the first term and the
  estimate \eqref{eq:H1_estimate} for the second term. This concludes
  the proof of \eqref{eq:bbb}.
  
  Next we prove \eqref{eq:ccc}. We first note that, for $t \in [t_n,
  t_{n+1})$
  \begin{equation*}
    \partial_t u^{\Dx}(x,t)=D_t^+u^n.
  \end{equation*}
  Thus, using Lemma \ref{lemma3} and Lemma \ref{lemma2}, we have
  \begin{align*}
    &\int_0^T\norm{\partial_t u^{\Dx}}^2_{H^{-2}([-R,R])} dt \le
    C\int^T_0 \norm{ u^{n+1}_x}^2_{L^{2}([-R,R])} \, dt\\
    & \le C\, \Dt \sum_{n=0}^{N-1} \norm{u^{n+1}_x}^2_{L^2([-R,R])} \le
    C(\norm{u_0}_{L^2(\R)},R).
  \end{align*}
  This shows that \eqref{eq:ccc} holds.
  
  Since $\varphi$ is a positive and bounded smooth function, using
  \eqref{eq:aaa}, \eqref{eq:bbb} we have
  \begin{subequations}
    \label{eq:phi_estimate}
    \begin{align}
      \label{eq:phi1}\norm{\test u^{\Dx}}_{L^\infty(0,T;L^2([-R,R]))}\le
      C(\norm{u_0}_{L^2(\R)},R),\\
      \label{eq:phi2}\norm{\test u^{\Dx}}_{L^2(0,T;H^1([-R,R]))}\le
      C(\norm{u_0}_{L^2(\R)},R).
    \end{align}
  \end{subequations}
  Using \eqref{eq:phi_estimate} and \eqref{eq:ccc} we can apply the
  Aubin-Simon compactness lemma (see \cite{HKR1}) applied to the set
  $\seq{\test u^\Dx}_{\Dx>0}$ to conclude that there exist a sequence
  $\seq{\Dx_j}_{j\in\N}$ such that $\Dx_j\to 0$, and a function
  $\tilde{u}$ such that
  \begin{equation}\label{eq:phi_convergence}
    u^{\Dx_j}\varphi\rightarrow \tilde{u}\qquad \text{strongly in  
      $L^2(0,T;L^2([-R,R]))$,} 
  \end{equation}
  as $j$ goes to infinity.  As $\test\ge 1$,
  \eqref{eq:phi_convergence} implies that there exists a $u$ such that
  \eqref{eq:convergence} holds.
  
  This strong convergence allows passage to the limit in
  nonlinearity. However, it remains to prove that u is a weak solution
  of \eqref{eq:main}.  In what follows, we will consider the standard
  $L^2$-projection of a function $\psi$ with $k+1$ continuous
  derivatives into space $S_{\Dx}$, denoted by $\mathcal{P}$, i.e.,
  \begin{align*}
    \int_{\R} \left(\mathcal{P} \psi(x) - \psi(x) \right) v(x) =0,
    \quad \forall v \in S_{\Dx}.
  \end{align*}
  For the projection mentioned above we have that (for a proof,
  see the monograph of Ciarlet \cite{ciarlet})
  \begin{align*}
    \norm{\psi(x) - \mathcal{P} \psi(x)}_{H^k(\R)} \le C \Dx
    \norm{\psi}_{H^{k+1}(\R)},
  \end{align*}
  where $C$ is a constant independent of $\Dx$.

  We also need the following inequality:
  \begin{equation} \label{sobolev ineq} \norm{u^n}_{L^{\infty}[-R+1,
      R-1]} \le C(R) \norm{u^n}_{H^1(-R,R)}
  \end{equation}
  where $C_R$ is some positive constant depends only on $R$. To show
  this inequality, we consider the the smooth function $\eta$ such
  that $\eta=1$ on $[-R+1, -R-1]$ and $\eta=0$ on the set $\seq{\abs{x}>
  R-\frac12}$. Then, it is easy to see that
  \begin{equation*}
    \abs{u^n(x) \eta(x)} \le \left(\norm{\eta}_{L^\infty(\R)}+
      \norm{\eta_x}_{L^\infty(\R)}\right) (2R)^{1/2}\norm{u^n}_{H^1([-R,R])}.
  \end{equation*}
  As $\eta =1$ on $[-R+1, R-1]$, we conclude that \eqref{sobolev ineq}
  holds.

  We first show that
  \begin{equation}
    \begin{aligned}
      \label{eq:clm}
      \int_0^T \int_{\R} u^{\Dx}_t \varphi v - \frac{(u^{\Dx})^2}{2}
      (\varphi v)_x + (u^{\Dx})_x (\varphi v)_{xx}
      \,dx\,dt=\mathcal{O}(\Dx),
    \end{aligned}
  \end{equation}
  for any test function $v \in C_c^{\infty} \left((-R+1,R-1) \times
    [0,T)\right)$, where $\varphi$ is specified in the
  beginning of Section~\ref{sec:scheme}.
 
  Let $v^{\Dx}=\mathcal{P}v$, then from the definition of $u_{\Dx}$
  (c.f.~\eqref{eq:approx}), it is evident that
  \begin{align*}
    &\int_0^T \int_{\R}  u^{\Dx}_t \varphi v - \frac{(u^{\Dx})^2}{2}
    (\varphi v)_x + (u^{\Dx})_x (\varphi v)_{xx}\,dxdt \\ 
    & \quad = \underbrace{\sum_{n} \int_{\R} \int_{t_n}^{t_{n+1}}
      D^{+}_{t} u^n \varphi v^{\Dx} - \frac{(u^{n+1})^2}{2} (\varphi
      v^{\Dx})_x
      + (u^{n+1})_x (\varphi v^{\Dx})_{xx} \,dtdx}_{=0 \, \text{by}\,
      \eqref{eq:scheme}} \\ 
    & \qquad + \sum_{n} \int_{\R} \int_{t_n}^{t_{n+1}}
    \underbrace{D^{+}_{t} u^n \left(\varphi v -\varphi v^{\Dx}
      \right)}_{\mathcal{E}^{1,n}_{\Dx}}
    -  \underbrace{\frac{(u^{n+1})^2}{2} \left( \varphi v - \varphi
        v^{\Dx} \right)_x}_{\mathcal{E}^{2,n}_{\Dx}}\,dtdx \\ 
    & \qquad  +\sum_{n} \int_{\R} \int_{t_n}^{t_{n+1}}
    \underbrace{ (u^{n+1})_x \left( \varphi v - \varphi
        v^{\Dx}\right)_{xx}}_{\mathcal{E}^{3,n}_{\Dx}}
    + \underbrace{ \left(u^{\Dx}- u^{n+1} \right)_x \left( \varphi v
      \right)_{xx}}_{\mathcal{E}^{4,n}_{\Dx}} \,dtdx \\ 
    & \qquad + \sum_{n} \int_{\R} \int_{t_n}^{t_{n+1}}
    \underbrace{\left( - \frac{(u^{\Dx})^2}{2} + \frac{(u^{n+1})^2}{2}
      \right)(\varphi v)_x }_{\mathcal{E}^{5,n}_{\Dx}}\,dtdx
  \end{align*}
  We proceed with 
  \begin{align*}
   &\Bigl| \sum_{n} \int_{\R} \int_{t_n}^{t_{n+1}}\!\!\!\!\! \mathcal{E}^{1,n}_{\Dx}
    \,dx\,dt\Bigr| = \Bigl|\sum_{n} \int_{\R} \int_{t_n}^{t_{n+1}} D^{+}_{t} u^n
    \left(\varphi v -\varphi v^{\Dx} \right)\,dtdx \Bigr|
    \\ 
    &\le C(R)  \sum_{n}  \int_{t_n}^{t_{n+1}} \norm{D^{+}_t \left( u^n
        \varphi\right)}_{H^{-2}([-R,R])} \norm{v -
      v^{\Dx}}_{H^2([-R+1,R-1])} \,dt\\ 
    &\le \Dx \,C(\norm{u_0}_{L^2(\R)}, R)
    \norm{v}_{L^2\left((0,T);H^3([-R+1,R-1])\right)} \to 0,
    \ \text{as} \, \Dx \downarrow 0.
  \end{align*}
  Next, using \eqref{sobolev ineq}, we obtain
  \begin{align*}
    \Bigl|&\sum_{n} \int_{\R} \int_{t_n}^{t_{n+1}} \mathcal{E}^{2,n}_{\Dx}
    \,dtdx\Bigr| 
    = \Bigl|\sum_{n} \int_{\R} \int_{t_n}^{t_{n+1}}
    \left(\frac{u^{n+1}}{2} \right)^2 \left( \varphi v - \varphi
      v^{\Dx} \right)_x \,dtdx\Bigr|
    \\
    & \quad = \Bigl|\sum_{n} \int_{-R+1}^{R-1} \int_{t_n}^{t_{n+1}}
    \left(\frac{u^{n+1}}{2} \right)^2  \left( \varphi \left( v -
        v^{\Dx} \right)_x +  \left( v-v^{\Dx} \right) \varphi _x
    \right)\,dtdx \Bigr|\\ 
    &\quad \le  C(R)\left(\sum_{n}\int_{t_n}^{t_{n+1}}\!\!\!\!\!
    \norm{u^{n+1}}_{L^{\infty}([-R+1,R-1])} \int_{-R+1}^{R-1}
    \abs{u^{n+1}}  \abs{ \left( v - v^{\Dx} \right)_x} \,dtdx\right.\\
   &\hspace{2cm}+ \left.\sum_{n}\int_{t_n}^{t_{n+1}}\!\!\!\!\!
          \norm{u^{n+1}}_{L^{\infty}([-R+1,R-1])} \int_{-R+1}^{R-1}
                 \abs{u^{n+1}}  \abs{ \left( v - v^{\Dx} \right)} \,dtdx\right)\\
    & \quad \le C(R)\,\sum_{n}\int_{t_n}^{t_{n+1}}
    \norm{u^{n+1}}^2_{H^1([-R,R])}\, \norm{v-v^{\Dx}}_{H^1([-R+1,R-1])}dt\\ 
    & \quad \le C(\norm{u_0}_{L^2(\R)}, R) \,\Dx
    \norm{v}_{L^{\infty}((0,T);H^2([-R+1,R-1]))} \to 0, \ \text{as}
    \ \Dx \downarrow 0.
  \end{align*}
  Using the Cauchy-Schwartz inequality
  \begin{align*}
   & \Bigl|\sum_{n} \int_{\R} \int_{t_n}^{t_{n+1}}\!\!\!\!\!
    \mathcal{E}^{3,n}_{\Dx} \,dtdx\Bigr|
    =\Bigl|\sum_{n}\int_{\R}\int_{t_n}^{t_{n+1}}\!\!\! (u^{n+1})_x 
    \left( \varphi v - \varphi v^{\Dx}\right)_{xx}\,dtdx\Bigr|
    \\
    &\le \norm{u^\Dx}_{L^2(0,T;H^1([-R,R]))} \Bigl( \int_0^T 
    \norm{\test v(t) - \test
      \mathcal{P}v(t)}_{H^2([-R+1,R-1])}^2\,dt\Bigr)^{1/2}
    \\ 
    &\le \Dx \,C(\norm{u_0}_{L^2(\R)},R) \norm{v}_{L^2((0,T);H^3([-R+1,R-1]))}
    \to 0, \ \text{as} \  \Dx \downarrow 0,
  \end{align*}
  and integration by parts
  \begin{align*}
   \Bigl| & \sum_{n} \int_{\R} \int_{t_n}^{t_{n+1}} \mathcal{E}^{4,n}_{\Dx}
    \,dtdx\Bigr|
    = \Bigl|\sum_{n} \int_{\R} \int_{t_n}^{t_{n+1}}  \left(u^{\Dx}-
      u^{n+1} \right)_x \left( \varphi v \right)_{xx} \,dtdx \Bigr|
    \\ 
    &\quad = 
    \Bigl|\sum_{n} \int_{\R} \int_{t_n}^{t_{n+1}}  \left( -\Dt
      D_t^{+}u^n + (t-t_n) D_t^{+}u^n \right) \left( \varphi v
    \right)_{xxx} \,dtdx\Bigr|
    \\
    &\quad \le \Dt \sum_n \int_{-R+1}^{R-1}\int_{t_n}^{t_{n+1}}\!\!\!\! \abs{D_t^+
      u^n}\,\abs{(\test v)_{xxx}}\,dtdx
    \\
    & \quad \le \Dt \,C(\norm{u_0}_{L^2(\R)},R) \norm{\test v}_{L^2((0,T);H^5([-R+1,R-1]))}
    \to 0, \ \text{as} \  \Dt \downarrow 0.
  \end{align*}
  Next, we estimate the term containing $\mathcal{E}^{5,n}_\Dx$,
  \begin{align*}
    \sum_{n}& \int_{\R} \int_{t_n}^{t_{n+1}} \mathcal{E}^{5,n}_{\Dx}
    \,dx\,dt = \sum_{n} \int_{\R} \int_{t_n}^{t_{n+1}} \left( -
      \frac{(u^{\Dx})^2}{2} +  \frac{(u^{n+1})^2}{2}  \right)(\varphi
    v)_x \,dtdx \\ 
    &=\sum_{n} \int_{\R} \int_{t_n}^{t_{n+1}} \left( -
      \frac{(u^{n})^2}{2}  +  \frac{(u^{n+1})^2}{2}   \right)(\varphi
    v)_x \,dtdx \\ 
    &\quad - \sum_{n} \int_{\R} \int_{t_n}^{t_{n+1}} \left(   u^n
      (t-t^n) D^{+}_t u^n + \frac12 (t-t^n)^2 (D_t^{+}u^n)^2
    \right)(\varphi v)_x \,dtdx \\ 
    &= \Dt \sum_{n} \int_{\R} \int_{t_n}^{t_{n+1}} \left(
      \frac12(u^{n+1} + u^n) D_t^{+} u^n \right)(\varphi v)_x \,dtdx
    \\ 
    &\quad - \sum_{n} \int_{\R} \int_{t_n}^{t_{n+1}} \left(
       u^n (t-t_n) D^{+}_t u^n + \frac12 (t-t_n)^2
      (D_t^{+}u^n)^2 \right)(\varphi v)_x \,dtdx. 
  \end{align*}
  We claim that all the terms in the above expression converges to
  zero as $\Dt$ converges to zero since;
  \begin{align*}
    \Bigl| &\sum_{n}\int_{t_n}^{t_{n+1}} \int_{-R+1}^{R-1} u^n D_t^{+} u^n
    (\varphi v)_x \,dxdt\Bigr|
    \\
    &\le \sum_{n}\int_{t_n}^{t_{n+1}} \norm{u^n}_{L^{\infty}([-R+1,
      R-1])} \norm{D_t^{+} u^n \varphi}_{H^{-2}([-R,R])}
    \norm{\varphi v}_{H^3([-R+1,R-1])}\,dt\\
    &\le C
    \sum_{n}\int_{t_n}^{t_{n+1}}\norm{u^n}_{H^1([-R,R])} \left(
      \norm{u^{n+1}_x}_{L^2([-R,R])}+1\right) \norm{\varphi
      v}_{H^3([-R+1,R-1])} \,dt\\ 
    & \le C(\norm{u_0}_{L^2(\R)}, R)
    \norm{v}_{L^{\infty}((0,T);H^3([-R+1,R-1]))}
  \end{align*}
  and similarly,
  \pagebreak
  \begin{align*}
  & \abs{\sum_{n}\int_{t_n}^{t_{n+1}}  \int_{-R+1}^{R-1}  u^{n+1} D_t^{+} u^n
    (\varphi v)_x \,dxdt} \\
& \qquad \qquad \qquad \qquad \qquad \qquad  \le C(\norm{u_0}_{L^2(\R)}, R)
    \norm{v}_{L^{\infty}((0,T);H^3([-R+1,R-1]))}.
  \end{align*}
  Furthermore
  \begin{align*}
    \Bigl|\int_\R \int_{t_n}^{t_{n+1}}
    u^n\left(t-t_n\right) & D_t^+u^n \left(\test v\right)_x
    \,dtdx\Bigr|\\
    &\le \norm{u^n}_{L^\infty([-R+1,R-1])} 
    \Dt \int_{-R+1}^{R-1}\int_{t_n}^{t_{n+1}}\abs{D_t^+
      u^n}\,\abs{(\test v)_x}\,dtdx, 
  \end{align*}
  and
  \begin{align*}
   & \Bigl| \int_\R \int_{t_n}^{t_{n+1}} \left(t-t_n\right)^2
    \left(D_t^+ u^n\right)^2 \left(\test v\right)_{x}\,dtdx\Bigr| \\
    &\qquad \le \left(\norm{u^{n+1} - u^n}_{L^\infty([-R+1,R-1])}    \right) \Dt 
    \int_{-R+1}^{R-1}\int_{t_n}^{t_{n+1}}\abs{D_t^+
      u^n}\,\abs{(\test v)_x}\,dtdx.
  \end{align*}
  Therefore, these two terms can be estimated in the same manner as
  the preceding two term.
  Hence
  \begin{align*}
    \sum_{n} \int_{\R} \int_{t_n}^{t_{n+1}} \mathcal{E}^{5,n}_{\Dx}
    \,dtdx \to 0, \ \text{as} \  \Dt \downarrow 0.
  \end{align*}
  Combining all these above estimates, we conclude that \eqref{eq:clm}
  holds. Furthermore, passing limit as $\Dx \to 0$, we
  conclude that
  \begin{equation}
    \label{eq:clm1}
    \int_0^T \int_{\R} u_t \varphi v - \frac{u^2}{2} (\varphi v)_x +
    u_x (\varphi v)_{xx} \,dx\,dt=0,
  \end{equation}
  for any test function $v \in C_c^{\infty} ([-R+1,R-1] \times
  [0,T))$.  Finally, we choose $v = \phi/\varphi$ in \eqref{eq:clm1}
  with $\phi \in C_c^{\infty} ([-R+1,R-1] \times [0,T))$ and
  integrate-by-parts to conclude that \eqref{weak solution} holds, i.e.~that
  \begin{align*}
    \int_0^T \int_{-\infty}^{\infty} \Bigl(\phi_tu + \phi_x
    \frac{u^2}{2} - \phi_{xx}u_x \Bigr)\,dxdt +
    \int_{-\infty}^{\infty} \phi(x,0)u_0(x)\,dx =0.
  \end{align*}
  This finishes the proof of the Theorem ~\ref{theo:main}.

\end{proof}


\section{Numerical experiments}
\label{sec:numerical}
The fully-discrete scheme given by \eqref{eq:scheme} has been tested
on several numerical experiments  in order to test how well this
method works in practice. 

We let $S_\Dx$ consist of piecewise cubic splines defined as follows:
Let $f$ and $g$ be the functions
\begin{align*}
  f(y)&=1+y^2\left(2\abs{y}-3\right),\\
  g(y)&=
  \begin{cases}
    y(y+1)^2  &y\le 0,\\
    -y(y-1)^2 & y>0,
  \end{cases}
\end{align*}
and we define $f(y)=g(y)=0$ for $\abs{y}>1$. For $j\in \Z$ we define
\begin{equation*}
  v_{2j}(x)=f\left(\frac{x-x_j}{\Dx}\right),\qquad
  v_{2j+1}(x)=g\left(\frac{x-x_j}{\Dx}\right),
\end{equation*}
where $x_j=j\Dx$. The space spanned by $\seq{v_j}_{j=-M}^M$ is a
$4M+2$ dimensional subspace of $H^2(\R)$. In our numerical examples,
we used periodic boundary conditions. In the examples computing
solitary waves, the exact solution, as well as the numerical
approximations are all very close to zero at the boundary. Regarding
the weight function, we chose this to be $\test(x)=50+x$ in the
intervals under consideration in all our examples. In the Newton
iteration to obtain $u^{n+1}$, \eqref{eq:iteration scheme}, we
terminated the iteration if $\norm{w^{\ell+1}-w^{\ell}}\le \Dx^2$.

For $t=n\Dt$, we set $u_{\Dx}(x,t) = u^n(x,t)=\sum_{j=-M}^M u^n_j
v_j(x)$.  In all our examples, we measured the percentage $L^2$ error,
defined as
\begin{equation*}
  E=100\,\frac{\norm{u-u_\Dx}_{L^2}}{\norm{u}_{L^2}}.
\end{equation*}

\subsection{One-soliton solution} 
The equation \eqref{eq:main} has an exact solution
\begin{equation}
  \label{eq:onesol}
  w_1(x,t)=9\left(1-\tanh^2\left(\sqrt{3/2}(x-3t)\right)\right).
\end{equation}
This represents a single ``bump'' moving to the right with speed 3. We
have tested our scheme with initial data $u_0(x)=w_1(x,-1)$ in order
to check how fast this scheme converges. Recall that we are using
$w_1(x,-1)$ as initial data, so that $w_1(x,1)$ represents the
solution at $t=2$. The solution was calculated on a uniform grid with
$\Dx=20/(2M)$ in the interval $[-10,10]$.  In Table~\ref{tab:1} we
show the relative errors as well as the numerical convergence rates
for this example.
\begin{table}[h]
  \centering
  \begin{tabular}[h]{c|r r}
    $M$ &\multicolumn{1}{c}{$E$}&\multicolumn{1}{c}{rate}\\
    \hline\\ [-2.5ex]
    8  & 61.5 & \\[-1ex]
    16 & 33.6 & \raisebox{1.5ex}{0.87}\\[-1ex]
    32 & 5.8 & \raisebox{1.5ex}{2.52} \\[-1ex]
    64 & 3.2  & \raisebox{1.5ex}{0.86}\\[-1ex]
    128 & 3.1  & \raisebox{1.5ex}{0.03}\\[-1ex]
    256  & 1.9  & \raisebox{1.5ex}{0.69}\\[-1ex]
    512  & 1.1  & \raisebox{1.5ex}{0.87}\\[-1ex]
    1024  & 0.6  & \raisebox{1.5ex}{0.94}
  \end{tabular}
  \caption{Relative percentage  $L^2$ errors for the
    one-soliton solution, $w_1(x,2)$} 
  \label{tab:1}
\end{table}
From Table~\ref{tab:1} we see that the scheme converges, that the rate
is a bit erratic, but seems to converge to one.

\subsection{Two-soliton solution}
\label{subsub:twosol}
Physically, two solitons which have different shapes move with
different velocities, which is a result of the dependence between the
height of the soliton and the velocity. A higher soliton travels
faster than a lower soliton. If the two solitons travel along a
surface, the higher soliton will overtake the lower soliton, and after
the collision, both solitons will emerge unchanged.  We use the
following test problem for the two-soliton
interaction, where $u(x,0)=w_2(x,-10)$, with
\begin{equation}
  \label{eq:twosoliton}
  w_2(x,t)=6(b-a)\frac{b \csch^2\left(\sqrt{b/2}(x-2bt)\right) 
    +a\sech^2\left(\sqrt{a/2}(x-2at)\right)}
  {\left(\sqrt{a}\tanh\left(\sqrt{a/2}(x-2at)\right) - 
      \sqrt{b}\coth\left(\sqrt{b/2}(x-2bt)\right)\right)^2},
\end{equation}
for any real numbers $a$ and $b$. We have used $a=0.5$ and $b=1$. This
solution represents two waves that ``collide'' at $t=0$ and separate
for $t>0$. For large $\abs{t}$, $w_2(\cdot,t)$ is close to a sum of
two one-solitons at different locations.

Computationally, this is a much harder problem than the one-soliton
solution. We
computed the approximate solution at $t=20$. The exact solution in
this case is $w_2(x,10)$.  Figure~\ref{fig:1} we show the exact and
numerical solutions at $t=20$. 
\begin{figure}[h]
  \centering
  \includegraphics[width=0.99\linewidth]{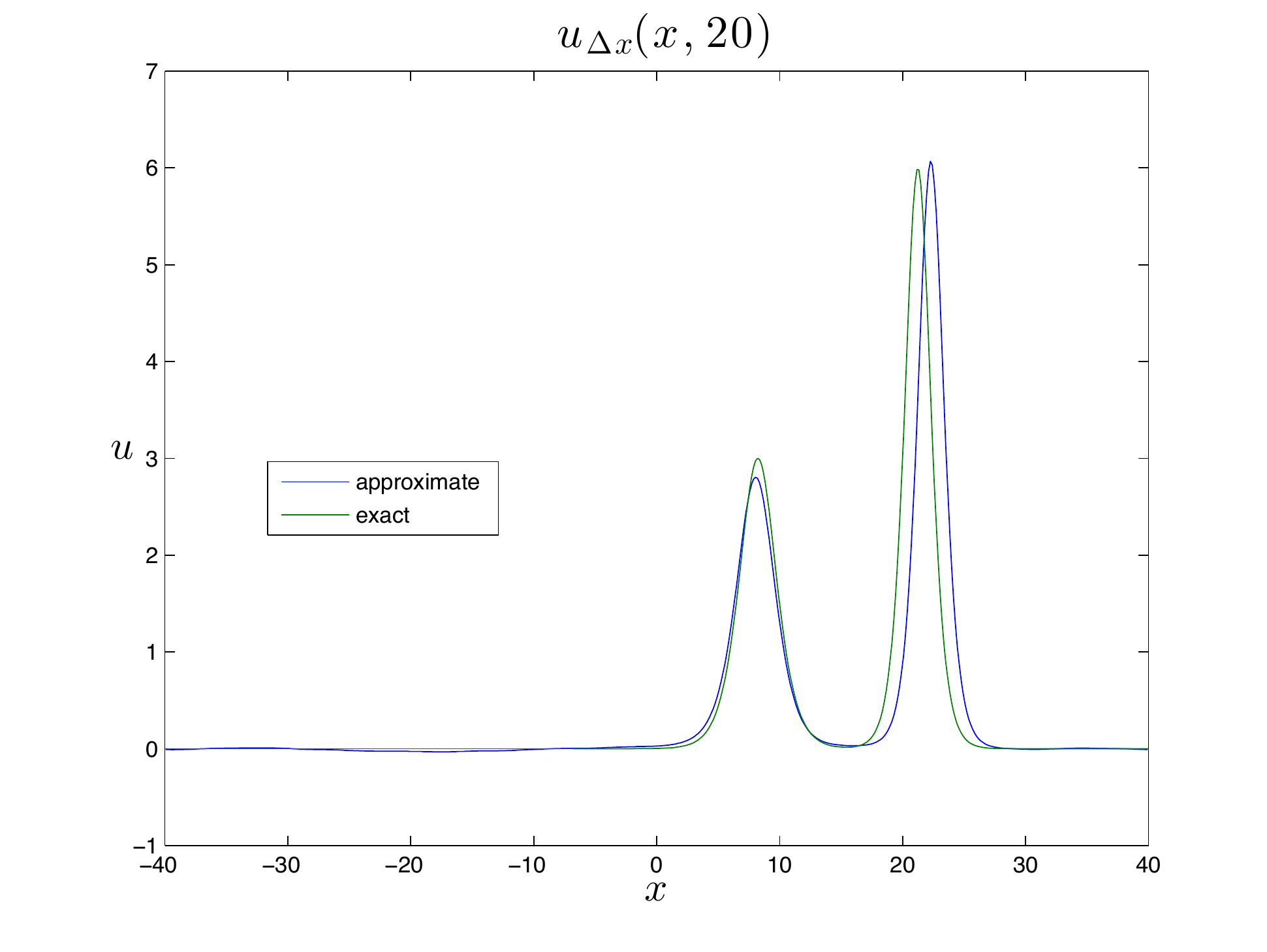}
  \caption{The exact and numerical solutions at $t=20$ with initial
    data $w_2(x,-10)$ with $M=256$.}
  \label{fig:1}
\end{figure}
The computed solution in Figure~\ref{fig:1} looks ``right'', in the
sense that the two bumps in the solution have separated well and
passed through each other. Nevertheless, the error is more than 50\%.
This is due to an error in the position of the larger bump, which
again is due to a much smaller error in the height of the bump. This
error causes the speed of the wave to be slightly larger than the
speed of the corresponding wave in the exact solution. Since the wave
is quite narrow, this causes the $L^2$ error to be large. In
Table~\ref{tab:2} we show the percentage errors for the two-soliton
simulation.
\begin{table}[h]
  \centering
  \begin{tabular}[h]{c|r r}
    $M$ &\multicolumn{1}{c}{$E$}&\multicolumn{1}{c}{rate}\\
    \hline\\ [-2.5ex]
    64  & 108 & \\[-1ex]
    128 & 41 & \raisebox{1.5ex}{1.3}\\[-1ex]
    256 & 54 & \raisebox{1.5ex}{-0.3} \\[-1ex]
    512 & 49  & \raisebox{1.5ex}{0.1}\\[-1ex]
    1024 & 30  & \raisebox{1.5ex}{0.7}\\[-1ex]
    2046 & 16 &\raisebox{1.5ex}{0.9}
  \end{tabular}
  \caption{Relative percentage  $L^2$ errors for the
    two-soliton solution.} 
  \label{tab:2}
\end{table}

\subsection{Initial data in $L^2$}\label{subsub:l2}
We have also applied our scheme on an example where the initial data
are in $L^2$, but not in any Sobolev space with positive index. To
this end we have chosen initial data
\begin{equation}
  \label{eq:l2init}
  u_0(x)=
  \begin{cases}
    0 & x\le 0,\\
    x^{-1/3} &0<x<1,\\
    0 &x\ge 1,
  \end{cases}
\end{equation}
if $x$ is in $[-5,5]$ and extended periodically outside this interval.
An exact solution is not available in this case, so we used a
third-order discontinuous Galerkin approximation with 386 degrees of
freedom as a reference solution, see \cite{ShuYan:2002,HKR1}. There is
no proof that this reference solution is close to the exact solution,
but lacking other alternatives, we choose to compare the approximate
solutions generated by our finite element scheme with this solution.

In Table~\ref{tab:L2errors} we show the relative errors for our
element method.
\begin{table}[h]
  \centering
  \begin{tabular}[h]{c|r r}
    $M$ &\multicolumn{1}{c}{$E$} &\multicolumn{1}{c}{rate} \\
    \hline\\[-2ex]
    16  & 65 & \\[-1ex]
    32 & 61 & \raisebox{1.5ex}{0.09} \\[-1ex]
    64 & 55 & \raisebox{1.5ex}{0.13} \\[-1ex]
    128 & 50  & \raisebox{1.5ex}{0.14} \\[-1ex]
    256 & 46  & \raisebox{1.5ex}{0.14} \\[-1ex]
    512 & 42  & \raisebox{1.5ex}{0.11} \\[-1ex]  
    1024 & 40  & \raisebox{1.5ex}{0.08} \\[-1ex]  
    2048 & 39  & \raisebox{1.5ex}{0.05} \\[-1ex]  
    4096 & 37  & \raisebox{1.5ex}{0.07} \\[-1ex]  
    8192 & 34  & \raisebox{1.5ex}{0.12} 
  \end{tabular}
  \caption{Relative percentage $L^2$ error between a reference
    solution using the discontinuous Galerkin method and our
    element method with initial data \eqref{eq:l2init} and $t=0.5$.} 
  \label{tab:L2errors}
\end{table}
The large errors and the slow convergence rate both indicate that we
are not yet in asymptotic regime. In Figure~\ref{fig:2} we show the
approximate solution at with the finest resolution (32768 degrees of
freedom) and the reference solution.
\begin{figure}[h]
  \centering
  \includegraphics[width=0.99\linewidth]{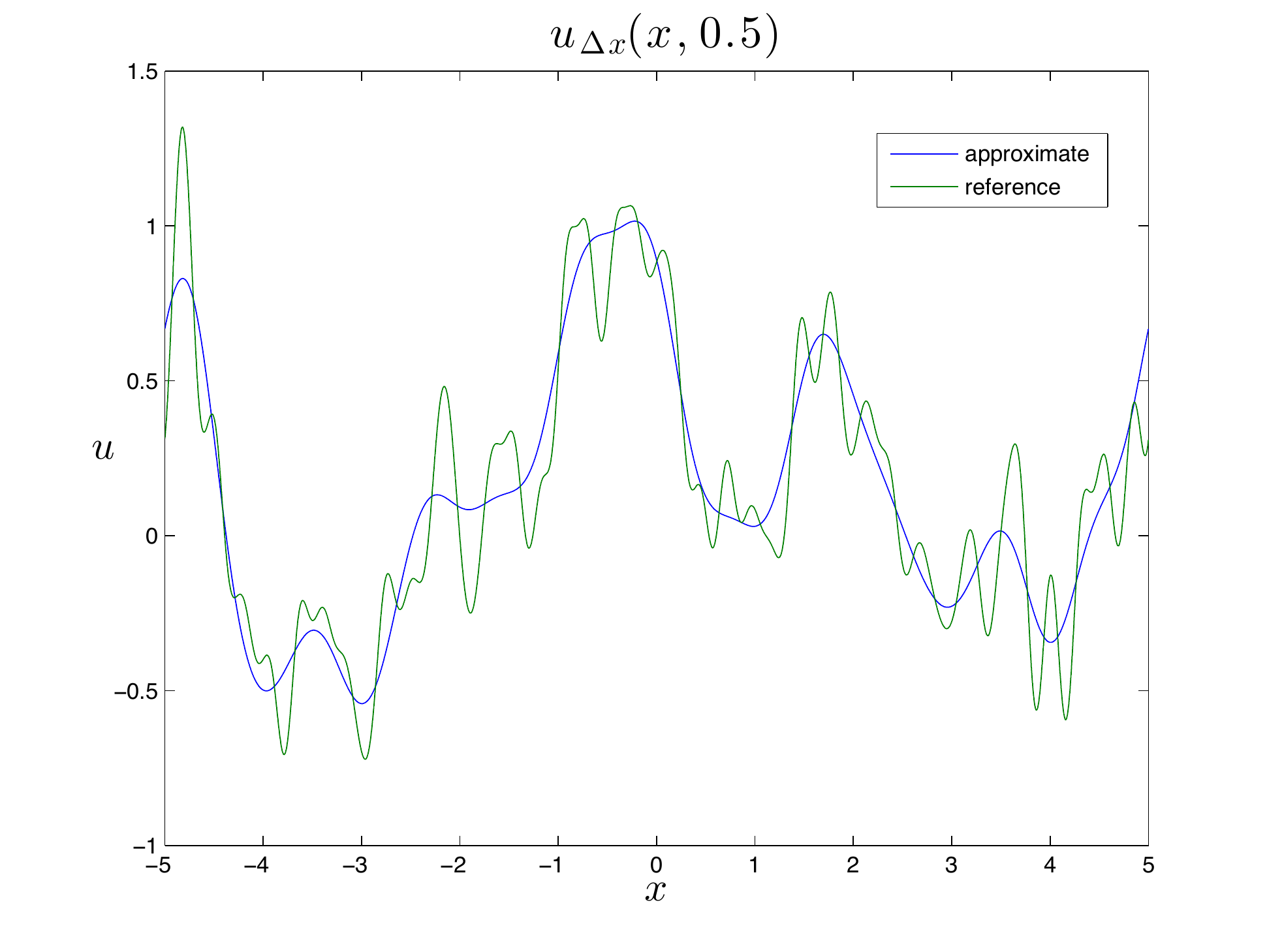}
  \caption{The numerical solution $u_{\Dx}(x,0.5)$ with initial data
    \eqref{eq:l2init} with $M=8192$, and the reference solution found
    by the third-order discontinuous Galerkin method.}
  \label{fig:2}
\end{figure}
There is however some doubt about the accuracy of the reference
solution. Our approximate solution is very close to an approximate
solution found by a simple difference scheme, see \cite{HKR1}, using
$\Dx=10/512000$.

\vspace{6.5mm}
\noindent {\bf Acknowledgments.}
This paper was written when NHR was a quest of the Seminar f\"{u}r
Angewandte Mathematik, ETH, Z\"{u}rich. This institution is thanked
for its hospitality. UK was supported in part by a Humboldt Research
Fellowship through the Alexander von Humboldt Foundation.


\end{document}